\newtheorem{thm}{Theorem}[subsection]
\newtheorem{lem}[thm]{Lemma}
\newtheorem{prop}[thm]{Proposition}
\theoremstyle{definition}
\newtheorem{defn-prop}[thm]{Definition-Proposition}
\newtheorem{rem}[thm]{Remark}
\newcommand{\Res}{\operatorname{Res}}
\newcommand{\R}{{\mathbb R}}
\newcommand{\Z}{{\mathbb Z}}
\newcommand{\ft}{{\mathfrak t}}
\newcommand{\cC}{{\mathcal C}}
\newcommand{\cN}{{\mathcal N}}
\newcommand{\cP}{{\mathcal P}}
\newcommand{\bbE}{{\operatorname{\bf E}}}
\newcommand{\bbP}{{\operatorname{\bf P}}}
\newcommand{\sbullet}{{\scriptscriptstyle\bullet}}
\newcommand*\harasita[1]{%
        \setbox0=\hbox{$\to$}%
        \@tempcnta=0
        \@tempdimb=\ht0
        \@tempdima=\@tempdimb
        \multiply\@tempdima #1
        \advance\@tempdima -\@tempdimb
        \divide\@tempdima 2
        \mathrel{\hbox{%
                $\@whilenum\@tempcnta<#1\do{%
                        \raisebox{\@tempdima}{\copy0}
                        \advance\@tempdima -\@tempdimb
                        \kern -\wd0
                        \advance\@tempcnta 1
                }%
                \kern\wd0$%
        }}
}
\def\lambdabar{\protect\@lambdabar}
\def\@lambdabar{%
\relax
\bgroup
\def\@tempa{\hbox{\raise.73\ht0
\hbox to0pt{\kern.25\wd0\vrule width.5\wd0
height.1pt depth.1pt\hss}\box0}}%
\mathchoice{\setbox0\hbox{$\displaystyle\lambda$}\@tempa}%
{\setbox0\hbox{$\textstyle\lambda$}\@tempa}%
{\setbox0\hbox{$\scriptstyle\lambda$}\@tempa}%
{\setbox0\hbox{$\scriptscriptstyle\lambda$}\@tempa}%
\egroup
}
\renewcommand{\Re}{{\operatorname{Re}}}
\renewcommand{\Im}{{\operatorname{Im}}}
\newcommand{\keywords}[1]{\textbf{{Keywords:}} #1}
\newcommand{\MSC}[1]{\textbf{{2010 Mathematical Subject Classification:}} #1}
\begin{document}

\title
{\bf
Asymptotic formula of the number of Newton polygons
}
\author{Shushi Harashita
\thanks{Graduate School of Environment and Information Sciences, Yokohama National University.
E-mail: \texttt{harasita@ynu.ac.jp}}}

\maketitle

\begin{abstract}
In this paper, 
we enumerate Newton polygons asymptotically.
The number of Newton polygons is computable by a simple recurrence equation,
but unexpectedly the asymptotic formula of its logarithm
contains growing oscillatory terms.
As the terms come from
non-trivial zeros of the Riemann zeta function,
an estimation of the amplitude of the oscillating part
is equivalent to the Riemann hypothesis.
\bigskip

\noindent\keywords{Newton polygons, Asymptotic formula, Riemann hypothesis, Central limit theorem}

\noindent\MSC{11P82, 11M06}

\end{abstract}

\section{Introduction}
In many algebro-geometric contexts,
Newton polygons appear as combinatorial invariants of algebraic objects.
For instance,
a polynomial over a local field defines a Newton polygon,
which knows much about how the polynomial factors.
It is also well-known as 
Dieudonn\'e-Manin classification (cf.\;\cite{Manin})
that isogeny classes
of $p$-divisible groups
(resp. the $p$-divisible groups of abelian varieties)
over an algebraically closed field in characteristic $p>0$
are classified by Newton polygons (resp. symmetric Newton polygons),
where $p$-divisible groups are also called Barsotti-Tate groups.
Also similar combinatorial data appear when we consider the Harder-Narasimhan filtration of vector bundles (cf.\;\cite{HN}).
This paper aims to show that
Newton polygons are not only useful to study such algebraic objects
but also have an importance on the number of them.

A {\it Newton polygon} of height $n$ is a lower convex line graph $\xi$
over the interval $[0,n]$ with $\xi(0)=0$ where
all breaking points of $\xi$ belong to $\Z^2$.
Our main theorem (Theorem \ref{MainTheorem1}) describes the asymptotic behavior of the number $\cN(n)$
of Newton polygons of height $n$ with slopes $\in [0,1)$ as $n\to\infty$.
It says that the logarithm of $\cN(n)$
oscillates around the logarithm of
\[
\cP(n):=\frac{C^{1/9}K}{\sqrt{6\pi}}\frac{1}{n^{11/18}}\exp\left(\frac{3}{2}C^{1/3}n^{2/3} \right),\]
where $C=2\zeta(3)/\zeta(2)=1.4615\cdots$
and $K=\exp(-2\zeta'(-1)-\log(2\pi)/6)=1.0248\cdots$.
The oscillating terms are the second main terms
but the coefficients of the terms
are so small that it would be hard to predict the oscillation
from any computational enumeration from the definition of $\cN(n)$,
whereas the  value of the constant $C^{1/9}K/\sqrt{6\pi}$ could be approximately predicted.
The oscillation gets larger and larger as $n$ increases.
We shall see in Theorem \ref{MainTheorem2} that
the amplitude of the oscillating part of the logarithm of $\cN(n)$ is $O(n^{1/6+\epsilon})$ for any $\epsilon > 0$
if the Riemann hypothesis (cf.\;\cite{R} and \cite[10.1]{T}) is true and has a larger order otherwise.

This paper is organized as follows.
In Section \ref{Section_generating_function}, we find a generating
function of $\cN(n)$ and
describe the logarithm of
the generating function.
Section \ref{Section_main_results}
is the main part of this paper.
Our main results are stated in Section \ref{Subsection Main results}.
In Section \ref{Section_CLT},
we give a proof of the asymptotic formula (Theorem \ref{MainTheorem1}), 
following the method of the paper \cite{BD} by B\'aez-Duarte,
where Hardy-Ramanujan asymptotic formula \cite{HR} for partitions of integers
was re-proved by applying
Lyapunov's central limit theorem with some tail estimations.
In Section \ref{ProofMainTheorem2} we prove the second theorem (Theorem \ref{MainTheorem2}) on
the relation between the amplitude of the oscillation and the Riemann hypothesis.
In Section \ref{Section_variants}, we treat two variants:
one is the case that slopes belong to the interval $[0,1]$
and the other is the case that Newton polygons are symmetric.
In Section \ref{Section_RecurrenceEquation}
we find a recurrence equation for the numbers of Newton polygons
and observe the asymptotic formula with numerical data.
\subsection*{Acknowledgments}
This work started with the graduation thesis (February in 2017) by Takuya Tani
(supervised by the author),
where a variant of the generating function \eqref{GF_two_variables} was found.
I would like to thank Professor Norio Konno
for helpful suggestions and supports continuing
after Tani's presentation of his graduation thesis,
which led me to the probabilistic approach.
The author also thank the anonymous referee for his/her careful reading and helpful comments.
This work was supported by
JSPS Grant-in-Aid for Scientific Research (C) 17K05196.

\section{The generating function
and its logarithm}\label{Section_generating_function}
In this section, we find a generating function for the numbers of Newton polygons, and study its logarithm.
\subsection{The generating function}
A {\it Newton polygon} of height $n$ and depth $d$
is a lower convex line graph starting at $(0,0)$ and ending at $(n,d)$
with breaking points belonging to $\Z^2$.
Let $n$ and $d$ be non-negative integers. 
Let $\rho(n,d)$ be the number of Newton polygons of height $n$ and depth $d$ with non-negative slopes $<1$.
Note
\begin{equation}\label{N is sum of rho}
\cN(n) = \sum_{d=0}^\infty \rho(n,d)
\end{equation}
with $\rho(n,d)=0$ for $d\ge \max\{1,n\}$.

A Newton polygon is expressed as a multiple set of segments,
where a segment is a pair $(k,\ell)$ of non-negative integers $k,\ell$ with $\gcd(k,\ell)=1$. 
Indeed, for 
a multiple set $\xi:=\{(k_i,\ell_i) \mid i=1,2,\ldots,t\}$ of segments
with $n=\sum k_i$ and $d=\sum \ell_i$,
we arrange them so that $\ell_i/k_i \le \ell_j/k_j$ for $i<j$,
and to $\xi$ we associate the Newton polygon of Figure \ref{fig:NP}.
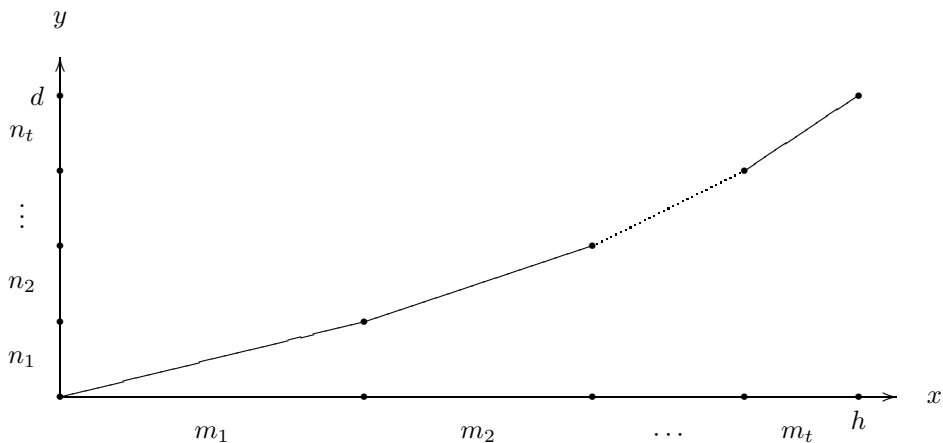
\begin{figure}[h]
\begin{equation*}
{\small
\begin{xy}
(0,0)="0"*{\sbullet},
(40,10)="1"*{\sbullet},
(70,20)="2"*{\sbullet},
(90,30)="3"*{\sbullet},
(105,40)="4"*{\sbullet},
"0";"1"**@{-},
"1";"2"**@{-},
"2";"3"**@{.},
"3";"4"**@{-},
(40,0)*{\sbullet},
(20,-5)*{k_1},
(70,0)*{\sbullet},
(55,-5)*{k_2},
(90,0)*{\sbullet},
(80,-5)*{\cdots},
(105,0)*{\sbullet},
(97,-5)*{k_t},
(105,-3)*{n},
(0,10)*{\sbullet},
(-5,5)*{\ell_1},
(0,20)*{\sbullet},
(-5,15)*{\ell_2},
(0,30)*{\sbullet},
(-5,25)*{\vdots},
(0,40)*{\sbullet},
(-5,35)*{\ell_t},
(-3,40)*{d},
%
(110,0)="1100"*{},
(0,45)="040"*{},
(115,0)*{x},
(0,50)*{y},
\ar "0";"1100"
\ar "0";"040"
\end{xy}
}
\end{equation*}
 \caption{Newton polygon}
 \label{fig:NP}
\end{figure}

We have a generating function of $\rho(n,d)$ in the following form
\begin{equation}\label{GF_two_variables}
\prod_{\begin{matrix} 0\le \ell/k <1, \\ \gcd(k,\ell)=1\end{matrix}} \frac{1}{1-x^{k}y^{\ell}}=
\sum_{n=0}^\infty\sum_{d=0}^{n} \rho(n,d)x^{n}y^{d}.
\end{equation}
To see this equation, we compare the $x^ny^d$-coefficients of the both sides.
The $x^ny^d$-coefficient of the left-hand side
is the number of multiple sets $\{(k_i,\ell_i)\}$
with $0 \le \ell_i/k_i < 1$ and $\gcd(k_i,\ell_i)=1$
such that $\sum k_i = n$ and $\sum \ell_i = d$,
which is nothing other than $\rho(n,d)$.

Substituting $1$ for $y$, we get
\begin{equation}\label{the generating function of N}
f(x):=\prod_{k=1}^\infty \left(\frac{1}{1-x^k}\right)^{\varphi(k)} = \sum_{n=0}^\infty \cN(n) x^n,
\end{equation}
where $\varphi(k)$ is Euler's totient function.
The radius of convergence of $f(x)$ is one, since
\begin{equation}\label{R_of_f}
\sum_{k=1}^\infty \varphi(k)\left|\frac{x^k}{1-x^k}\right| <\frac{1}{1-|x|}\sum_{k=1}^\infty k |x|^k< \infty
\end{equation}
for any $|x|<1$.

\subsection{The logarithm of the generating function}\label{Section_logarithm}
We study the logarithm of $f(e^{-\tau})$ for $\Re(\tau) > 0$,
following the method of Meinardus \cite{M}.
First we expand it as
\[
\log f(e^{-\tau}) = \sum_{k=1}^\infty \varphi(k) \sum_{m=1}^\infty \frac{1}{m}  e^{-mk\tau}.
\]
By the formula of Cahen and Mellin
\[
e^{-\tau} = \frac{1}{2\pi i} \int_{\sigma_0-i\infty}^{\sigma_0+i \infty} \Gamma(s) \frac{1}{\tau^s}\ ds \quad (\Re(\tau)>0, \sigma_0 > 0),
\]
we get
\begin{eqnarray}
\log f(e^{-\tau}) 
&=& \frac{1}{2\pi i} \int_{\sigma_0-i\infty}^{\sigma_0+i \infty}
\frac{1}{\tau^s}\Gamma(s) \sum_{m=1}^\infty  \frac{1}{m^{s+1}}
\sum_{k=1}^\infty \frac{\varphi(k)}{k^s}\ ds\nonumber\\
&=&\frac{1}{2\pi i} \int_{\sigma_0-i\infty}^{\sigma_0+i \infty}
\frac{1}{\tau^s}\Gamma(s) \zeta(s+1)\frac{\zeta(s-1)}{\zeta(s)}\ ds
\label{BasicExpressionFromCahenMellin}
\end{eqnarray}
for $\sigma_0 > 2$.
A formula of $\log f(e^{-\tau})$ will be obtained by
moving the line of integration to the left.

Recall the fact \cite[Theorem 9.7]{T} that
there exists a constant $A$ such that for every $\nu$ there exists $t_\nu\in [\nu,\nu+1]$ for which
\begin{equation}\label{KeyOrderEstimation}
\left|\frac{1}{\zeta(\sigma + i t_\nu)}\right| \le t_\nu^A \quad (-1\le \sigma \le 2).
\end{equation}
It is obvious to extend the interval to an arbitrary interval
by using the well-known order-estimations as $|t|\to\infty$:
$|\zeta^{\pm 1}(\sigma + i t)| = O(1)$ for $\sigma > 1$ (cf.\;\cite[3.1]{T})
and
$|\zeta^{\pm 1}(\sigma + i t)| = O\left(|t|^{-\sigma + 1/2}\right)$ for $\sigma < 0$
(cf.\;\cite[(4.12.3)]{T}.
Also it is known that $|\zeta^{\pm 1}(1 + i t)| = O(|t|^\epsilon)$ for any $\epsilon > 0$
(cf.\;\cite[(3.5.2), (3.6.5)]{T}).
As for the estimation of $\zeta(s)$ in the critical strip, for instance
use the convexity bound $|\zeta(s)| = O\left(|t|^{(1-\sigma)/2 + \epsilon}\right)$
for any $\epsilon>0$, 
see \cite[5.1]{T}.
From Stirling's formula (cf.\;\cite[(4.12.2)]{T}),
in an arbitrary strip $a\le \Re(s) \le b$,
for any $\varepsilon>0$ we have
\begin{equation}\label{OrderOfGamma}
\Gamma(s) = O\left(e^{-(\pi/2-\varepsilon)|\Im(s)|}\right)
\end{equation}
as $|\Im(s)|\to \infty$,
and $\Gamma(s)$ is rapidly decreasing also as $\sigma \to -\infty$.

Let $S$ be the set of poles of $\tau^{-s}\Gamma(s) \zeta(s+1)\zeta(s-1)\zeta(s)^{-1}$,
and $g_\alpha(\tau)$ its residue at $\alpha\in S$:
\begin{equation}\label{g_alpha}
g_\alpha(\tau):=\Res_{s=\alpha}\left(\tau^{-s}\Gamma(s)
\zeta(s+1)\frac{\zeta(s-1)}{\zeta(s)}\right).
\end{equation}
It is known that $\Gamma(s)$ has
no zero and has poles only at non-positive integers
and the poles are all simple.
Note that $S$ consists of $2,0$ and
the (non-trivial and trivial=negative even) zeros of $\zeta(s)$.
Let $c_0$ be a real number with $c_0 \ge 1$ or $c_0<0$, and
assume that $c_0$ is not equal
to the real part of any element $\alpha$ of $S$.
Then $\log f(e^{-\tau})$ is equal to
\begin{equation}\label{sum_and_integral}
\sum_{\alpha\in S \text{ s.t. } \Re(\alpha) > c_0 } g_\alpha(\tau) + \frac{1}{2\pi i} \int_{c_0-i\infty}^{c_0+i \infty}
\frac{1}{\tau^s}\Gamma(s) \zeta(s+1)\frac{\zeta(s-1)}{\zeta(s)}\ ds,
\end{equation}
where for $c_0<0$ the sum is precisely
\begin{equation}
\lim_{\nu\to\infty}
\sum_{\begin{matrix}\alpha\in S \text{ s.t. } \Re(\alpha) > c_0, \\ |\Im(\alpha)| < t_\nu\end{matrix}}
g_\alpha(\tau)
\end{equation}
with $t_\nu$ as in \eqref{KeyOrderEstimation} and this converges.
As the integral in \eqref{sum_and_integral} is $O(|\tau|^{-c_0})$
by \eqref{OrderOfGamma}
with the other order-estimations reviewed above, we obtain
\begin{equation}\label{ClearOrderEstimation}
\log f(e^{-\tau})
= \sum_{\alpha\in S \text{ s.t. } \Re(\alpha) > c_0 } g_\alpha(\tau) + O(|\tau|^{-c_0}).
\end{equation}
%

%


The residue of each pole is as follows.
First the residue at $s=2$ is
\begin{equation}\label{ResidueAtTwo}
g_2(\tau) = \frac{\zeta(3)}{\zeta(2)}\cdot\frac{1}{\tau^2}.
\end{equation}

Let $\gamma$ be a non-trivial zero of $\zeta(s)$.
If $\gamma$ is a simple zero, then
\begin{equation}
g_\gamma(\tau) = 
\frac{\Gamma(\gamma)\zeta(\gamma+1)\zeta(\gamma-1)}{\zeta'(\gamma)}\cdot \frac{1}{\tau^{\gamma}}.
\end{equation}
In general, $g_\gamma(\tau)$ is of the form
\begin{equation*}
g_\gamma(\tau) = \tau^{-\gamma}P_\gamma(\log \tau)
\end{equation*}
for some polynomial $P_\gamma$ whose degree is the order at $\gamma$ of $\zeta(s)$ minus $1$.

The order of the pole at $s=0$ of $\Gamma(s)\zeta(s+1)\zeta(s-1)\zeta(s)^{-1}$ 
is two. We have $g_0(\tau) = D'(0) - D(0) \log \tau$
with $D(s)=\zeta(s-1)/\zeta(s)$, whence
\begin{equation}\label{ResidueAtZero}
g_0(\tau) = - \frac{1}{6}\log \tau - 2\zeta'(-1)-\frac{1}{6}\log(2\pi).
\end{equation}

We forgo determining $g_\alpha(\tau)$ for $\Re(\alpha) \le -2$,
as those contributions to our asymptotic formula are
much smaller than the errors appearing in Section \ref{Section_CLT}.
\begin{prop}\label{Prop log of f}
For any $\epsilon>0$, we have
\begin{eqnarray*}
\log f(e^{-\tau}) &=& \frac{\zeta(3)}{\zeta(2)}\tau^{-2} + \sum_{\gamma}g_\gamma(\tau) 
- \frac{1}{6}\log \tau - 2\zeta'(-1)-\frac{1}{6}\log(2\pi)  +O(|\tau|^{2-\epsilon})\\
&=&\frac{\zeta(3)}{\zeta(2)}\tau^{-2} + O(\tau^{-1}).
\end{eqnarray*}
as $\tau \to 0$,
where $\gamma$ runs through non-trivial zeros of $\zeta(s)$.
\end{prop}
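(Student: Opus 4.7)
The plan is to apply formula \eqref{sum_and_integral} with $c_0 = -2 + \epsilon$ for the given $\epsilon$ (shrinking $\epsilon$ if necessary so that $\epsilon < 2$). This is an admissible choice in the sense required by \eqref{sum_and_integral}, since $c_0 < 0$ and $c_0$ differs from the real part of every element of $S$: the real parts of elements of $S$ are $2$, $0$, values in $(0,1)$ coming from non-trivial zeros, and the negative even integers $-2,-4,\ldots$. With this $c_0$, the integral along $\Re(s)=c_0$ is $O(|\tau|^{-c_0}) = O(|\tau|^{2-\epsilon})$, which produces the stated error term.

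Once $c_0$ is fixed, it remains to enumerate the elements $\alpha\in S$ with $\Re(\alpha)>-2+\epsilon$ and to read off their residues. These elements are exactly $s=2$, $s=0$, and the non-trivial zeros $\gamma$ of $\zeta(s)$. Crucially, $s=-1,-3,-5,\ldots$ are not elements of $S$ despite being poles of $\Gamma(s)$: the simple pole of $\Gamma$ at $s=-(2k+1)$ is cancelled by the simple zero of $\zeta(s-1)$ at the trivial zero $s-1=-(2k+2)$. The next contributing pole to the left of $c_0$ is therefore at $s=-2$, which is indeed excluded by our choice. The residues $g_2(\tau)=(\zeta(3)/\zeta(2))\tau^{-2}$, $g_0(\tau) = -\tfrac{1}{6}\log\tau - 2\zeta'(-1) - \tfrac{1}{6}\log(2\pi)$, and $g_\gamma(\tau)$ for the non-trivial zeros $\gamma$ have all been computed explicitly in the preceding paragraphs, so substituting into \eqref{sum_and_integral} yields the asserted formula.

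The principal substantive step — already carried out in the material leading up to the proposition — is the justification of the contour shift from $\Re(s)=\sigma_0>2$ to $\Re(s)=-2+\epsilon$ and the convergence of the resulting sum of residues over non-trivial zeros of $\zeta$. This is the delicate point, since in the strip being crossed one passes through the critical line and must detour around the zeros of $\zeta(s)$, where $1/\zeta(s)$ blows up. The key ingredients are: the estimate \eqref{KeyOrderEstimation}, which furnishes a polynomial bound on $1/\zeta(s)$ along a sequence of horizontal lines $\Im(s)=\pm t_\nu$; the exponential decay \eqref{OrderOfGamma} of $\Gamma(s)$ in the imaginary direction, which ensures the horizontal segments between the two vertical contours vanish as $\nu\to\infty$; and the standard convexity/functional-equation bounds on $\zeta(s)$ recalled earlier. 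Given these facts, the proof of the proposition is completed by simply substituting $c_0=-2+\epsilon$ into \eqref{sum_and_integral} and identifying the three types of residues.
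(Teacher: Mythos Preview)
Your proposal is correct and follows exactly the approach implicit in the paper: the proposition is stated immediately after the residue computations for $g_2$, $g_0$, and $g_\gamma$, with no separate proof, since it amounts to applying \eqref{sum_and_integral} with $c_0=-2+\epsilon$. Your observation that the simple pole of $\Gamma$ at $s=-1$ is cancelled by the trivial zero of $\zeta(s-1)$ there (so that no residue between $0$ and $-2$ is missed) is the only point not made fully explicit in the text, and you handle it correctly.
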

\begin{proof}
The first equality
follows from \eqref{ClearOrderEstimation} for $c_0=-2+\epsilon$ with \eqref{ResidueAtTwo} and \eqref{ResidueAtZero}. The second one follows from \eqref{ClearOrderEstimation} for $c_0=1$.
\end{proof}

\begin{rem}\label{difflogf}
In the same way, for any natural number $\nu$, one can show
\begin{eqnarray*}
\left(-\frac{d}{d\tau}\right)^\nu \log(e^{-\tau})
&=& (\nu+1)! \frac{\zeta(3)}{\zeta(2)}\frac{1}{\tau^{\nu+2}}
+ (-1)^\nu\sum_\gamma g_\gamma^{(\nu)}(\tau)\\
&& +\frac{(\nu-1)!}{6}\frac{1}{\tau^\nu} + O(|\tau|^{2-\nu-\epsilon})\\
&=&(\nu+1)! \frac{\zeta(3)}{\zeta(2)}\frac{1}{\tau^{\nu+2}} + O(\tau^{-1-\nu})
\end{eqnarray*}
for any $\epsilon > 0$, from the $\nu$-th derivative of \eqref{BasicExpressionFromCahenMellin}
\begin{equation*}
\left(-\frac{d}{d\tau}\right)^\nu\log f(e^{-\tau}) 
=\frac{1}{2\pi i} \int_{\sigma_0-i\infty}^{\sigma_0+i \infty}
\frac{1}{\tau^{s+\nu}}\Gamma(s+\nu) \zeta(s+1)\frac{\zeta(s-1)}{\zeta(s)}\ ds
\end{equation*}
for $\sigma_0 > 2$.
\end{rem}

\section{The asymptotic formula}\label{Section_main_results}
We state the main results on the asymptotic formula of $\cN(n)$
and on relations to the Riemann hypothesis
in Section \ref{Subsection Main results},
and prove them in later subsections.
\subsection{Main results}\label{Subsection Main results}
Here is the main result on the asymptotic formula of $\cN(n)$.
\begin{thm}\label{MainTheorem1}
We have
\[
\cN(n)
\sim \frac{C^{1/9}K}{\sqrt{6\pi}}\frac{1}{n^{11/18}}
\exp\left(\frac{3}{2}C^{1/3}n^{2/3}+\sum_{\gamma} g_\gamma\left(C^{1/3}n^{-1/3}\right)\right)
\]
with $\displaystyle C=2\frac{\zeta(3)}{\zeta(2)}$ and
$\displaystyle K=\exp\left(-2\zeta'(-1)-\frac{1}{6}\log(2\pi)\right)$,
where $\gamma$ runs through non-trivial zeros of $\zeta(s)$,
and the sum is precisely defined to be
\begin{equation*}
\lim_{\nu\to\infty}\sum_{|{\operatorname{Im}}(\gamma)| < t_\nu} g_\gamma(\tau),
\end{equation*}
with the notation of \eqref{KeyOrderEstimation},
see \eqref{g_alpha} for $g_\gamma(\tau)$.
Note that
\[
g_\gamma(\tau) = \frac{\Gamma(\gamma)\zeta(\gamma+1)\zeta(\gamma-1)}{\zeta'(\gamma)} \frac{1}{\tau^{\gamma}}
\]
for simple zeros $\gamma$.
\end{thm}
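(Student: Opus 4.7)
The plan is to follow the probabilistic saddle-point approach of B\'aez-Duarte \cite{BD}, who re-derived the Hardy-Ramanujan partition asymptotic via a local central limit theorem. For each $\tau > 0$ I introduce the probability measure on $\Z_{\ge 0}$
\[
P_\tau(X = k) \;:=\; \frac{\cN(k)\, e^{-k\tau}}{f(e^{-\tau})},
\]
so that the identity $\cN(n) = f(e^{-\tau})\, e^{n\tau}\, P_\tau(X = n)$ holds tautologically. Factoring $f$ as in \eqref{the generating function of N} exhibits $X$ as a sum of independent summands $m\, Y_{m,k}$ indexed by $m \ge 1$ and by the $\varphi(m)$ admissible numerators, with $Y_{m,k} \sim \mathrm{Geom}(1 - e^{-m\tau})$. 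In particular the mean, variance, and higher cumulants of $X$ are read off from the derivatives of $\log f(e^{-\tau})$ supplied by Remark \ref{difflogf}.

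I would choose the pseudo-saddle $\tau_n := C^{1/3} n^{-1/3}$. By Remark \ref{difflogf} with $n = 1$ and $n = 2$, for any $\epsilon > 0$,
\[
E_{\tau_n}[X] \;=\; \frac{2\zeta(3)/\zeta(2)}{\tau_n^{3}} + O(n^{1/2 + \epsilon}) \;=\; n + O(n^{1/2 + \epsilon}),
\]
\[
\sigma_n^{2} \;:=\; \mathrm{Var}_{\tau_n}(X) \;=\; \frac{3C}{\tau_n^{4}}(1 + o(1)) \;=\; 3 C^{-1/3}\, n^{4/3}(1 + o(1)).
\]
Combining Proposition \ref{Prop log of f} applied at $\tau = \tau_n$ with the identities $\tau_n^{-1/6} = C^{-1/18} n^{1/18}$ and $\sqrt{2\pi \sigma_n^{2}} = \sqrt{6\pi}\, C^{-1/6}\, n^{2/3}(1 + o(1))$, the theorem reduces to the local limit statement
\[
P_{\tau_n}(X = n) \;\sim\; \frac{1}{\sqrt{2\pi\, \sigma_n^{2}}}.
\]

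The heart of the argument, and the main obstacle, is proving this local limit theorem. I would use Fourier inversion,
\[
P_{\tau_n}(X = n) \;=\; \frac{1}{2\pi}\int_{-\pi}^{\pi} E_{\tau_n}\!\bigl[e^{i t (X - n)}\bigr]\, dt,
\]
and split the integral into a central piece $|t| \le \delta_n$ and a tail $\delta_n \le |t| \le \pi$, with $\delta_n$ chosen so that $\sigma_n \delta_n \to \infty$ while the third-cumulant correction is still negligible. On the central piece a Taylor expansion around $t = 0$ together with a Lyapunov-type bound on $\sum_{m} \varphi(m)\, E|m Y_{m,\cdot} - E(m Y_{m,\cdot})|^{3}$ — estimated using moments of geometrics and the average order $\varphi(m) \asymp m$ — yields the Gaussian approximation $\exp(-\sigma_n^{2} t^{2}/2)$, which integrates to $1/\sqrt{2\pi \sigma_n^{2}}$. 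The tail piece requires a uniform bound $|E_{\tau_n}[e^{itX}]/f(e^{-\tau_n})| \le e^{-c/\tau_n^{\alpha}}$ for some $\alpha, c > 0$ and $\delta_n \le |t| \le \pi$, which amounts to showing that for a positive density of $m$'s the quantity $\log|1 - e^{-m\tau_n + i m t}|^{-1}$ differs macroscopically from $\log|1 - e^{-m\tau_n}|^{-1}$. This Diophantine-style estimate on the characteristic function is the technically demanding step and is where the detailed structure of the exponents $\varphi(m)$ must be exploited.

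A final bookkeeping point: the oscillatory sum $\sum_\gamma g_\gamma(\tau_n)$ is of size up to $O(n^{1/6 + \epsilon})$ under the Riemann hypothesis (cf.~Theorem \ref{MainTheorem2}) and sits inside the exponent of the main term, so the formula must be proved as a genuine $\sim$ rather than only up to $e^{o(n^{1/6})}$. Evaluation at the pseudo-saddle $\tau_n$ rather than the exact saddle $\tau_n^{\ast}$ changes each $g_\gamma(\tau_n)$ by at most $|g_\gamma'(\tau_n)|\,|\tau_n^{\ast} - \tau_n| = O(|\gamma|\, n^{(\Re \gamma + 1)/3 - 5/6})$, and summability over $\gamma$ ensures the total change is $o(1)$; verifying this with the error bounds coming out of the local CLT closes the argument.
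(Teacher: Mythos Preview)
Your outline follows the same B\'aez-Duarte framework as the paper, but there is a genuine gap at the claim $E_{\tau_n}[X] = n + O(n^{1/2+\epsilon})$. Remark~\ref{difflogf} for the first derivative gives
\[
E_{\tau_n}[X] \;=\; C\,\tau_n^{-3} \;-\; \sum_\gamma g'_\gamma(\tau_n) \;+\; \tfrac{1}{6}\,\tau_n^{-1} \;+\; O(\tau_n^{1-\epsilon}),
\]
and each term $g'_\gamma(\tau_n)$ has order $\tau_n^{-\Re\gamma-1} = n^{(\Re\gamma+1)/3}$. Your bound $O(n^{1/2+\epsilon})$ is therefore equivalent to $\Re\gamma \le 1/2$ for every non-trivial zero, i.e.\ to the Riemann hypothesis. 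Unconditionally the residue expansion yields at best $E_{\tau_n}[X]-n = o(n^{2/3})$, which is exactly the borderline: the local CLT gives $P_{\tau_n}(X=n)\sim (2\pi\sigma_n^2)^{-1/2}$ only if $(E_{\tau_n}[X]-n)/\sigma_n \to 0$, and $\sigma_n \asymp n^{2/3}$. The paper isolates precisely this statement --- that $\varepsilon(t):=(m(t)-C\tau^{-3})/\sigma(t)\to 0$ --- as a separate Proposition and proves it \emph{not} from the residue expansion but by deforming the contour for $m(t)-C\tau^{-3}$ from $\Re(s)=1$ slightly into the classical zero-free region $\Re(s)>1-c_0/\log^9|\Im s|$, using the bound $\zeta(s)^{-1}=O(\log^7|\Im s|)$ there together with the decay of $\Gamma(s+1)$. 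This zero-free-region argument is the substantive analytic input your plan is missing; the bookkeeping in your final paragraph on $|g_\gamma(\tau_n)-g_\gamma(\tau_n^\ast)|$ does not address it, since the obstruction is already the size of $\sum_\gamma g'_\gamma(\tau_n)$ itself, not its variation between the pseudo-saddle and the exact saddle.

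A secondary remark: for the tail $\delta_n\le|t|\le\pi$ the paper does not need any Diophantine analysis of the weights $\varphi(m)$. It estimates $\log\bigl|f(e^{-\tau+i\theta})/f(e^{-\tau})\bigr|$ directly from the Mellin representation on $\Re(s)=1$, extracting the explicit real part of $\zeta(3)\zeta(2)^{-1}\bigl((\tau-i\theta)^{-2}-\tau^{-2}\bigr)$ and bounding the remaining integral by $O(\tau^{-1})$, which gives $\le -c_2\tau^{-2}$ in a few lines. This is considerably simpler than the approach you anticipate.
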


As in Introduction, we put
\begin{equation}
\cP(n):=\frac{C^{1/9}K}{\sqrt{6\pi}}\frac{1}{n^{11/18}}\exp\left(\frac{3}{2}C^{1/3}n^{2/3} \right).
\end{equation}
By Theorem \ref{MainTheorem1}
above $\log \cN(n)$ is equivalent to $\log \cP(n)$
as $n \rightarrow+\infty$; indeed the difference is
\[
\sum_{\gamma} g_\gamma\left(C^{1/3}n^{-1/3}\right) = O(n^{1/3})
\]
(cf.\;Proposition \ref{Prop log of f}).
In Proposition \ref{FinerEstimationOfOcillation},
this bound will be refined to
\[
O\left(n^{1/3-c(\log\log n)^{-2/3}(\log\log\log n)^{-1/3}}\right)
\]
for some constant $c>0$.
A sharp estimation of the difference is equivalent to the Riemann hypothesis:
\begin{thm}\label{MainTheorem2}
\begin{enumerate}
\item[\rm (1)] The Riemann hypothesis is true if and only if
\begin{equation*}
|\log \cN(n) - \log \cP(n)| = O\left(n^{1/6+\epsilon}\right) 
\end{equation*}
for any $\epsilon > 0$.
\item[\rm (2)] The Riemann hypothesis is true
and all the non-trivial zeros of $\zeta(s)$ are simple if
\begin{equation*}\label{ConjectualStrongEstimation}
|\log \cN(n) - \log \cP(n)| = O\left(n^{1/6}\right) 
\end{equation*}
holds.
\end{enumerate}
\end{thm}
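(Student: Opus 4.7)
The starting point is Theorem \ref{MainTheorem1}, which rewrites
\[
\log \cN(n) - \log \cP(n) = F(\tau_n) + o(1), \qquad F(\tau) := \sum_{\gamma} g_\gamma(\tau), \quad \tau_n := C^{1/3} n^{-1/3},
\]
so that the two estimates to establish are $F(\tau_n) = O(n^{1/6+\epsilon})$ and $F(\tau_n) = O(n^{1/6})$, equivalently $F(\tau) = O(\tau^{-1/2-3\epsilon})$ and $F(\tau) = O(\tau^{-1/2})$ along the sequence $\tau = \tau_n \to 0$. Each $g_\gamma(\tau) = \tau^{-\gamma} P_\gamma(\log \tau)$ with $\deg P_\gamma = m_\gamma - 1$, whose coefficients are built from $\Gamma(\gamma)$, $\zeta(\gamma \pm 1)$ and reciprocals of $\zeta$-derivatives at $\gamma$, none of which vanish for non-trivial $\gamma$.

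For the forward direction of part (1), assume RH so that every $\gamma = \tfrac{1}{2} + it_\gamma$. Stirling gives $|\Gamma(\gamma)| = O(e^{-\pi|t_\gamma|/2})$, which dominates the polynomial-in-$|t_\gamma|$ growth of the remaining factors reviewed around \eqref{KeyOrderEstimation}; summing over $\gamma$ yields $|F(\tau)| = O(\tau^{-1/2}(\log 1/\tau)^M)$ for some $M$ accounting for the highest multiplicity present, whence $|F(\tau_n)| = O(n^{1/6+\epsilon})$ for every $\epsilon > 0$.

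The converse direction is where the work lies. Given $|F(\tau_n)| = O(\tau_n^{-1/2-3\epsilon})$, I would first upgrade it to the continuous bound $|F(\tau)| = O(\tau^{-1/2-3\epsilon})$ by interpolation: Remark \ref{difflogf} supplies $|F'(\tau)| = O(\tau^{-1-\Theta-\epsilon})$ with $\Theta := \sup_\gamma \Re(\gamma) \le 1$, and $\tau_n - \tau_{n+1} = O(\tau_n/n)$, so the variation of $F$ between successive $\tau_n$ is negligible. Setting $u = -\log \tau$ and $G(u) := F(e^{-u}) = \sum_\gamma P_\gamma(-u) e^{\gamma u}$, suppose for contradiction that some zero $\gamma_0$ has $\Re(\gamma_0) > 1/2 + 3\epsilon$ and is of maximal real part. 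The hypothesis becomes $|G(u) e^{-\gamma_0 u}| \le C e^{(1/2 + 3\epsilon - \Re \gamma_0) u} \to 0$, whereas the Ces\`aro mean
\[
\lim_{T \to \infty} \frac{1}{T} \int_0^T G(u) e^{-\gamma_0 u} \, du
\]
can be evaluated termwise using Bohr almost-periodicity of the finite-frequency sum (with tails controlled by $\Gamma$-decay and \eqref{KeyOrderEstimation}): only the $\gamma = \gamma_0$ term survives, giving either the nonzero value $P_{\gamma_0}(0)$ when $m_{\gamma_0} = 1$ or divergence of order $T^{m_{\gamma_0}-1}$ otherwise. This contradicts the vanishing of the same limit; letting $\epsilon \to 0$ and using the functional equation of $\zeta$ yields RH.

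Part (2) is a refinement. Under $|F(\tau)| = O(\tau^{-1/2})$, part (1) already delivers RH, so every $\gamma = \tfrac{1}{2} + it_\gamma$. If some $\gamma_0$ had multiplicity $m \ge 2$, the $u^{m-1}$-coefficient of $P_{\gamma_0}(-u)$ would be nonzero, and the higher-order mean $\frac{1}{T^m}\int_0^T G(u) e^{-\gamma_0 u}\, du$ would extract it; yet the hypothesis $|G(u) e^{-u/2}| = O(1)$ bounds the same mean by $O(T^{1-m}) \to 0$, a contradiction. The main obstacle I foresee is the rigorous exchange of limits implicit in the Bohr-type orthogonality for the infinite sum defining $G$: though the $\Gamma$-decay gives absolute pointwise convergence, the uniform control needed to pass the time-average inside the sum rests on a tail estimate like \eqref{KeyOrderEstimation}. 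A cleaner alternative would be to realize $\int_0^a F(\tau) \tau^{s-1}\, d\tau$ via a contour shift as an explicit meromorphic function whose poles are exactly the non-trivial zeros of $\zeta(s)$, converting both parts into standard statements about its half-plane of analyticity.
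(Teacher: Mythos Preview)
Your primary strategy---bounding the sum $\sum_\gamma g_\gamma(\tau)$ term by term and, for the converse, extracting a single frequency by a Ces\`aro mean---has a genuine gap already in the forward direction of (1). For a simple zero the coefficient of $g_\gamma$ contains the factor $1/\zeta'(\gamma)$, and no polynomial-in-$|t_\gamma|$ bound on this quantity is known even under RH; the estimate \eqref{KeyOrderEstimation} you invoke controls $1/\zeta(\sigma+it_\nu)$ only along well-chosen horizontal lines, not derivatives of $\zeta$ at zeros. In addition, RH by itself does not bound the multiplicities, so there is no fixed $M$ with $\deg P_\gamma\le M$. The same missing lower bound on $|\zeta'(\gamma)|$ is precisely what obstructs the uniform tail estimate you would need to pass the time-average inside the infinite sum in your converse argument, as you correctly flag.

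The paper takes exactly the route you call the ``cleaner alternative,'' thereby avoiding individual zeros entirely. It works with the integral representation of $h(\tau)=\log f(e^{-\tau})-\zeta(3)\zeta(2)^{-1}\tau^{-2}$ (equivalently $H(x)=h(1/x)$), never decomposing it as $\sum_\gamma g_\gamma$. For the forward direction one shifts the line of integration to $\Re(s)=\tfrac12+\delta$ using the RH-conditional bound $\zeta(\tfrac12+\delta+it)^{-1}=O(|t|^\varepsilon)$, which yields $H(x)=O(x^{1/2+\delta})$ directly. For the converse one computes the Mellin transform of a mild modification $\tilde H$ and identifies it with $\Gamma(s)\zeta(s+1)\zeta(s-1)\zeta(s)^{-1}-\zeta(3)\zeta(2)^{-1}(s-2)^{-1}$; the growth hypothesis on $\tilde H$ makes this integral converge for $\Re(s)>\tfrac12$, so $1/\zeta(s)$ is analytic there. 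Part (2) then follows because the hypothesis $\tilde H(x)=O(x^{1/2})$ forces the same Mellin integral to be $O\bigl((\Re(s)-\tfrac12)^{-1}\bigr)$ as $s\to\gamma$, whereas a multiple zero would give a pole of order at least two.
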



\subsection{Proof of Theorem \ref{MainTheorem1}}\label{Section_CLT}
This paper follows the method by B\'aez-Duarte \cite{BD},
where he applied 
a probabilistic  approach to re-proving
Hardy-Ramanujan's asymptotic formula \cite{HR}
for partitions of integers.

In general, let
\begin{equation}
f(t) = \sum_{n=0}^\infty a_n t^n
\end{equation}
be a power series with $a_n \ge 0$ and positive radius $R$ of convergence.
To each $t$ with $0<t<R$, an integral random variable $X_t$ is associated so that
\begin{equation}
\bbP(X_t = n) = \frac{a_n t^n}{f(t)}.
\end{equation}
The characteristic function is given by
\begin{equation}
\bbE(e^{i\theta X_t}) = \frac{f(te^{i\theta})}{f(t)}.
\end{equation}
The mean and the variance are
\begin{equation}\label{mean and variance}
m(t) = t \frac{d}{dt} \log f(t),\quad \sigma^2(t) = t \frac{d}{dt} m(t) \quad
\end{equation}
respectively.
More generally $\left(t\cdot d/dt\right)^\nu \log f(t)$
is equal to the $\nu$-th cumulant (also called semi-invariant).
The $\nu$-th moment $\alpha_\nu(t) := \bbE((X_t-m(t))^\nu)$
is described by a polynomial in the cumulants, especially we have
\begin{eqnarray}\label{alpha4}
\alpha_4(t) &=& \left(t\frac{d}{dt}\right)^4 \log f(t) + 3 \sigma^4(t),
\end{eqnarray}
see \cite[Chap. IV, 2]{C}.

Let us return to our situation
\begin{equation}\label{our situation}
f(t) = \sum_{n=0}^\infty \cN(n) t^n.
\end{equation}
As seen in \eqref{R_of_f}, the radius of convergence of $f(t)$ is $1$.
From now on $X_t$, $m(t)$ and $\sigma^2(t)$ stand for the random variable,
the mean and the variance for \eqref{our situation} respectively.
By \eqref{mean and variance} and Remark \ref{difflogf}, we have
\begin{equation}\label{MeanOrder}
m(e^{-\tau}) = - \frac{d}{d\tau}\log f(e^{-\tau})
=  2\frac{\zeta(3)}{\zeta(2)} \tau^{-3} + O(\tau^{-2})
\end{equation}
and
\begin{eqnarray}\label{SigmaSquare}
\sigma^2(e^{-\tau}) = - \frac{d}{d\tau} m(e^{-\tau})
&=&6\frac{\zeta(3)}{\zeta(2)}\tau^{-4} + O(\tau^{-3}).
\end{eqnarray}
It follows from \eqref{SigmaSquare} that
\begin{equation}\label{Sigma}
\sigma(e^{-\tau}) =\sqrt{6\frac{\zeta(3)}{\zeta(2)}}\tau^{-2} + O(\tau^{-1}).
\end{equation}

Recall the definition of $f(t)$ given in \eqref{the generating function of N}:
\begin{equation}\label{product of f_k}
f(t)=\prod_{k=1}^\infty f_k(t)^{\varphi(k)}
\end{equation}
with
\[
f_k(t) = (1-t^k)^{-1} = 1 + t^k + t^{2k} + \cdots.
\]
We may also consider the random variable $X_{t,k}$ for $f_k(t)$.
Let $m_k(t)$ and $\sigma^2_k(t)$ be 
the mean and the variance for $X_{t,k}$, respectively.
The product \eqref{product of f_k} means that
the random variables $\{\varphi(k)\text{-copies of }X_{t,k}\}_k$ are stochastically independent.

With respect to the normalized random variable
\begin{equation}
Z(t) = \frac{X_t-m(t)}{\sigma(t)},
\end{equation}
the coefficient $\cN(n)$ of $f(t)$ is described as
\begin{eqnarray}
\cN(n) &=& \frac{1}{2\pi t^n}\int_{-\pi}^\pi f(te^{i\theta}) e^{-in\theta} d\theta\nonumber\\
&=& \frac{f(t)}{2\pi\sigma(t)t^n}\int_{-\pi\sigma(t)}^{\pi\sigma(t)} \bbE(e^{i\vartheta Z(t)})e^{-i\vartheta\frac{n-m(t)}{\sigma(t)}}\ d\vartheta
\end{eqnarray}
with $\vartheta = \sigma(t)\theta$.
Now we choose $\tau_n$ so that
\begin{equation}
m(e^{-\tau_n}) = n
\end{equation}
and put $t_n = e^{-\tau_n}$. Then
\begin{equation}\label{Description of a_n}
\cN(n) = \frac{f(t_n)}{2\pi\sigma(t_n)t_n^n }
\int_{-\pi\sigma(t_n)}^{\pi\sigma(t_n)} \bbE(e^{i\vartheta Z(t_n)}) d\vartheta.
\end{equation}
From the next proposition, we obtain
\begin{equation}
\cN(n) \sim \frac{f(t_n)}{\sqrt{2\pi}\sigma(t_n)t_n^n },
\end{equation}
since obviously $\sigma(t_n)\to\infty$ as $n\to\infty$.

\begin{prop}
$f(t)$ satisfies the strong Gaussian condition, 
i.e.,
\[
\int_{-\pi\sigma(t)}^{\pi\sigma(t)}
\left|\bbE(e^{i\vartheta Z(t)}) - e^{-\vartheta^2/2}\right| d \vartheta \to 0
\]
as $t \to 1$.
\end{prop}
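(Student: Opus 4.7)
The plan is to follow B\'aez-Duarte's strategy by splitting the integration interval $[-\pi\sigma(t),\pi\sigma(t)]$ at a slowly growing threshold $\vartheta_0=\vartheta_0(t)\to\infty$, chosen so that $\vartheta_0/\sigma(t)\to 0$ fast enough; concretely $\vartheta_0(t)=\sigma(t)^{1/8}$ will suffice. Write the integral as a central part over $|\vartheta|\le \vartheta_0$ plus a tail part over $\vartheta_0<|\vartheta|\le\pi\sigma(t)$, and show separately that each contributes $o(1)$.

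For the central part, I would Taylor-expand the cumulant generating function:
\[
\log\bbE(e^{i\vartheta Z(t)})=-\frac{\vartheta^2}{2}+\sum_{\nu\ge 3}\frac{(i\vartheta)^\nu}{\nu!\,\sigma(t)^\nu}\,\kappa_\nu(t),
\]
where $\kappa_\nu(t)=(t\,d/dt)^\nu\log f(t)$. Remark \ref{difflogf} gives $\kappa_\nu(e^{-\tau})\sim (\nu+1)!\,\zeta(3)/\zeta(2)\cdot\tau^{-\nu-2}$ and $\sigma(t)^2\sim 6\zeta(3)/\zeta(2)\cdot\tau^{-4}$, so $\kappa_\nu(t)/\sigma(t)^\nu=O(\tau^{\nu-2})$ and hence each summand is $O(\tau^{3\nu/4-2})$ uniformly for $|\vartheta|\le\vartheta_0$. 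The remainder sum is $o(1)$, giving both the pointwise convergence $\bbE(e^{i\vartheta Z(t)})\to e^{-\vartheta^2/2}$ and an integrable dominating function (of Gaussian type for $|\vartheta|\ge 1$); dominated convergence then sends the central part to $0$.

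For the tail, the factorization \eqref{product of f_k} and the geometric distribution $\bbP(X_{t,k}=jk)=(1-t^k)t^{jk}$ of each independent copy of $X_{t,k}$ yield the explicit identity
\[
|\bbE(e^{i\vartheta Z(t)})|^2=\prod_{k=1}^\infty \left(1+\frac{4t^k\sin^2(k\vartheta/(2\sigma(t)))}{(1-t^k)^2}\right)^{-\varphi(k)}.
\]
Since $\int_{|\vartheta|\ge\vartheta_0}e^{-\vartheta^2/2}d\vartheta\to 0$ automatically, it suffices to bound $\int_{\mathrm{tail}}|\bbE(e^{i\vartheta Z(t)})|\,d\vartheta$, and as the interval has length $O(\sigma(t))$ it is enough to show that
\[
S(t,\vartheta):=\sum_{k=1}^\infty \varphi(k)\log\!\Bigl(1+\frac{4t^k\sin^2(k\vartheta/(2\sigma(t)))}{(1-t^k)^2}\Bigr)
\]
satisfies $S(t,\vartheta)-2\log\sigma(t)\to+\infty$ uniformly on the tail. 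I would treat two subranges: on the moderate subrange $\vartheta_0\le|\vartheta|\le \delta\sigma(t)$, use $\sin^2 x\asymp x^2$ for small $x$ and apply a Mellin-transform analysis analogous to Section \ref{Section_logarithm} to extract a lower bound on $S(t,\vartheta)$ of order $\vartheta^2\cdot\tau$; on the large subrange $\delta\sigma(t)\le|\vartheta|\le\pi\sigma(t)$, count directly those $k$ for which $k\vartheta/(2\sigma(t))$ is well separated from $\pi\Z$, using the elementary lower bound $\sum_{k\le N}\varphi(k)\gg N^2$.

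The main obstacle is the tail bound on $S(t,\vartheta)$: the central-zone argument reduces cleanly to the cumulant asymptotics stated in Remark \ref{difflogf}, but controlling the infinite product uniformly for large $\vartheta$ requires a delicate two-scale Diophantine analysis, analogous to a ``minor arc'' estimate in the circle method, with the arithmetic weights $\varphi(k)$ replacing the constant weights of the classical Hardy-Ramanujan partition case.
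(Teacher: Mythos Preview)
Your plan is viable and follows the same B\'aez-Duarte template, but the paper handles both regions more economically, and in particular sidesteps the ``main obstacle'' you flag. For the central and intermediate range the paper does not expand the full cumulant series: it verifies Lyapunov's condition by bounding $\sum_k\varphi(k)\,\bbE|Y_{t,k}|^3$ via $\bbE|Y|^3\le(\bbE Y^4)^{3/4}$ together with an explicit fourth-moment computation, obtaining $\sigma(t)^{-3}\sum_k\varphi(k)\,\bbE|Y_{t,k}|^3=O(\tau)$; Lyapunov's classical estimate then furnishes the envelope $|\bbE(e^{i\vartheta Z(t)})|\le e^{-\vartheta^2/3}$ all the way out to $|\vartheta|\le c_1/\tau$, far beyond your $\vartheta_0=\sigma(t)^{1/8}\sim\tau^{-1/4}$. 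For the remaining tail $|\theta|\ge c_1'\tau$ the paper does not touch the infinite product at all: it writes $\log\bbE(e^{i\theta X_t})=\log f(te^{i\theta})-\log f(t)$ and feeds this directly into the Cahen--Mellin representation \eqref{BasicExpressionFromCahenMellin} with the contour shifted to $\Re(s)=1$, so that the pole at $s=2$ produces the closed-form leading term $\frac{\zeta(3)}{\zeta(2)}\bigl((\tau-i\theta)^{-2}-\tau^{-2}\bigr)$ with remainder $O(\tau^{-1})$, and one reads off $\log|\bbE(e^{i\vartheta Z(t)})|\le -c_2\tau^{-2}$ uniformly. The arithmetic weights $\varphi(k)$ are already packaged into the Dirichlet series $\zeta(s-1)/\zeta(s)$, so the analytic continuation replaces your two-scale Diophantine minor-arc estimate by a three-line computation. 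Your route through the explicit product and a counting argument can certainly be made to work, but it reproves from scratch what Section~\ref{Section_logarithm} already supplies for complex $\tau$. (One small slip: on your moderate subrange the lower bound for $S(t,\vartheta)$ should be of order $\vartheta^2$, not $\vartheta^2\tau$; substituting $\sin x\approx x$ back into your product recovers $S\approx\vartheta^2$.)
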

\begin{proof}
Put $Y_{t,k} = X_{t,k} - m_k(t)$.
From the equation \eqref{alpha4}, we have
\begin{equation*}
\bbE(Y_{t,k}^4) = \left(t \frac{d}{dt}\right)^4 \log\frac{1}{1-t^k} + 3\sigma_k(t)^4
= \frac{k^4(t^{3k}+7t^{2k}+t^k)}{(1-t^k)^4}.
\end{equation*}
Put
\[
F_k(t)=\left(t \frac{d}{dt}\right)^3 \log\frac{1}{1-t^k}=\frac{k^3(t^{2k}+t^k)}{(1-t^k)^3}.
\]
There is a constant $c_0>0$ such that
\begin{equation}\label{4to3}
\bbE(Y_{t,k}^4)^{3/4} \le c_0 F_k(t^{3/4})
\end{equation}
for $0\le t < 1$,
since $(1-t^k)^{-3} \le (1-t^{3k/4})^{-3}$ and 
$(t^{2k}+7t^k+1)^3/(t^{3k/4}+1)^4$ is bounded.
%
By a standard inequality (cf.\;\cite[Chap.~III, (20)]{C}), we have
\begin{equation*}
\sum_{k=1}^\infty \varphi(k) \bbE(|Y_{t,k}|^3) 
\le 
\sum_{k=1}^\infty \varphi(k) \bbE(|Y_{t,k}|^4)^{3/4} \le c_0 F(t^{3/4}),
\end{equation*}
where
\[
F(t)=\sum_{k=1}^\infty \varphi(k) F_k(t)
=\left(-\frac{d}{d\tau}\right)^3 \log f(e^{-\tau}) =  O(\tau^{-5})
\]
with $t=e^{-\tau}$.
Thus we have
\begin{equation}
\sigma(t)^{-3} \sum_{k=1}^\infty \varphi(k) \bbE(|Y_{t,k}|^3) = O(\tau)
\end{equation}
as $\tau \downarrow 0$. As in particular the Lyapunov condition is satisfied,
the proof of the central limit theorem implies that
\begin{equation}
\bbE(e^{i\vartheta Z(t)}) \to e^{-\vartheta^2/2}
\end{equation}
uniformly over any fixed finite interval of $\vartheta$.
By an estimate due to Lyapunov \cite[Chap.~VII, Lemma 3]{C},
in the set
$|\vartheta| \le c_1/\tau$ for some constant $c_1$, we have
\begin{equation}\label{SecondRegion}
\left|\bbE(e^{i\vartheta Z(t)})\right|
\le e^{-\vartheta^2/3}.
\end{equation}

For $|\vartheta| > c_1/\tau$
(in other words $|\theta|=|\vartheta/\sigma(t)|>c_1'\tau$ for some constant $c_1'$), we consider
\begin{eqnarray*}
\log \bbE (e^{i\theta X_t})
&=& \log f(te^{i\theta}) - \log f(t)\\
&=& \frac{\zeta(3)}{\zeta(2)} \left(\frac{1}{(\tau-i\theta)^2}-\frac{1}{\tau^2}\right)\\
& & + \frac{1}{2\pi i}\int_{1-i\infty}^{1+i\infty} 
\left(\frac{1}{(\tau-i\theta)^s}-\frac{1}{\tau^s}\right)\Gamma(s)\frac{\zeta(s+1)\zeta(s-1)}{\zeta(s)}\ ds.
\end{eqnarray*}
The integral is $O(\tau^{-1})$, since so is 
$(\tau-i\theta)^{-s}-\tau^{-s}$ for $\Re(s)=1$.
Hence
\begin{eqnarray*}
\log \left|\bbE (e^{i\vartheta Z(t)})\right| &=& \log \left|\bbE(e^{i\theta X_t})\right|\\
&=& -\frac{\zeta(3)}{\zeta(2)}
\frac{1+3\left(\frac{\tau}{\theta}\right)^2}{\left(1+\left(\frac{\tau}{\theta}\right)^2\right)^2}\frac{1}{\tau^2}
+ O\left(\frac{1}{\tau}\right)\\
&\le & - c_2\frac{1}{\tau^2}
\end{eqnarray*}
for some constant $c_2 > 0$.
Thus
\begin{equation}\label{ThirdRegion}
\left| \bbE(e^{i\vartheta Z(t)}) \right| \le e^{-c_2/\tau^2}
\end{equation}
holds in the set $c_1/\tau < |\vartheta| \le \pi\sigma(t)$.
Hence for arbitrary large $A$ (independent of $t$), 
for any $\epsilon > 0$
there exists $\delta>0$ such that for any $t$ with $1-\delta \le t < 1$
(i.e., $0 < \tau \le - \log(1-\delta)$),
we have
\begin{equation}\label{region1}
\int_{|\vartheta|\le A}
\left|\bbE(e^{i\vartheta Z(t)}) - e^{-\vartheta^2/2}\right| d \vartheta < \epsilon,
\end{equation}
and by \eqref{SecondRegion} the inequality
\begin{equation}\label{region2}
\int_{A\le |\vartheta| \le c_1/\tau}
\left|\bbE(e^{i\vartheta Z(t)}) - e^{-\vartheta^2/2}\right| d \vartheta 
< 6 e^{-A^2/3}
\end{equation}
follows from the elementary inequality
\[
\int_A^\infty e^{-\vartheta^2/3} d\vartheta
\le \int_A^\infty \vartheta e^{-\vartheta^2/3} d\vartheta
=\left[-\frac{3}{2}e^{-\vartheta^2/3}\right]_A^\infty = \frac{3}{2}e^{-A^2/3}.
\]
Also by \eqref{ThirdRegion} we have
\begin{equation}\label{region3}
\int_{c_1/\tau\le |\vartheta|\le \pi\sigma(t)}
\left|\bbE(e^{i\vartheta Z(t)}) - e^{-\vartheta^2/2}\right| d \vartheta <
\left(\pi\sigma(t)-\frac{c_1}{\tau}\right)e^{-c_3/\tau^2}
\end{equation}
for some constant $c_3>0$.
As \eqref{region1} can be arbitrary small
if $\tau$ is sufficiently small,
\eqref{region2} can be arbitrary small
if $A$ is sufficiently large,
and \eqref{region3} goes to zero as $\tau\to 0$, we have the proposition.
\end{proof}
%

It is not so easy to evaluate $f(t_n)/(\sqrt{2\pi}\sigma(t_n)t_n^n)$
even approximately
after finding $t_n$ satisfying $m(t_n)=n$.
For this, 
we make use of the asymptotic substitution lemma:
B\'aez-Duarte \cite[Lemma 1]{BD}. 
Let us recall it in our situation.
Put
\begin{eqnarray}
m_1(t) &=& C \tau^{-3},\label{m_1}\\
\sigma_1(t) &=& \sqrt{3C} \tau^{-2}\label{sigma_1} \nonumber
\end{eqnarray}
with $C=2\zeta(3)/\zeta(2)$, where $t=e^{-\tau}$.
Let $t_n'$ be the real number satisfying
\begin{equation}\label{t_n prime}
m_1(t_n') = n.
\end{equation}
In the same way to get \eqref{Description of a_n}, we have
\begin{equation*}
a_n = \frac{f(t'_n)}{2\pi\sigma_1(t_n')(t_n')^n}
\int_{-\pi\sigma_1(t_n')}^{\pi\sigma_1(t_n')}\bbE\left(e^{i\vartheta_1 Z_1(t_n')}\right) d\vartheta_1,
\end{equation*}
where $Z_1(t)=(X_t-m_1(t))/\sigma_1(t)$.
This is equal to
\begin{equation*}
\frac{f(t_n')}{2\pi\sigma_1(t_n')(t_n')^n} \frac{\sigma_1(t_n')}{\sigma(t_n')}
\int_{-\pi\sigma(t_n')}^{\pi\sigma(t_n')}\bbE\left(e^{i\vartheta Z(t_n')}\right)
e^{i\varepsilon(t_n')\vartheta} d\vartheta,
\end{equation*}
where
\begin{equation}
\varepsilon(t):=\frac{m(t) - m_1(t)}{\sigma(t)}.
\end{equation}
If $m_1(t)$ and $\sigma_1(t)$ satisfy
(i) $m_1(t)\to\infty$,
(ii) $\sigma_1(t)\sim \sigma(t)$
and
(iii) $\varepsilon(t) \to 0$ as $t\uparrow 1$,
then we have
\begin{equation}\label{AfterAsymptoticSubstitution}
a_n \sim \frac{f(t'_n)}{\sqrt{2\pi}\sigma_1(t_n')(t'_n)^n}.
\end{equation}
By \eqref{m_1} and \eqref{t_n prime} we have
$\tau'_n = C^{1/3}n^{-1/3}$
with $t'_n = e^{-\tau'_n}$
and
\[
\sigma(t'_n) = \sqrt{3C}\left(C^{1/3}n^{-1/3}\right)^{-2}=\sqrt{3}C^{-1/6}n^{2/3}.
\]
Then it is straightforward to get Theorem \ref{MainTheorem1} from \eqref{AfterAsymptoticSubstitution}.

As (i) and (ii) follow from \eqref{MeanOrder} and \eqref{Sigma}
respectively, it remains only to show (iii).
Put
\begin{equation*}
\beta_0 := \inf\{\beta \mid \zeta(s)\ne 0 \text{ for } \Re(s) > \beta\}.
\end{equation*}
If $\beta_0 < 1$, then (iii) would also be clear.
But $\beta_0 < 1$ has not been proven so far.
However, by good fortune, we have (iii) unconditionally.
\begin{prop}
Put $u=-\log(-\log(t))$. Then
\[
\varepsilon(t) = O\left(e^{-c\cdot u (\log u)^{-2/3} (\log \log u)^{-1/3}}\right)
\]
for some constant $c>0$.
In particular, 
$\varepsilon(t)
\to 0$ as $t\uparrow 1$.
\end{prop}
\begin{proof}
Put
\begin{eqnarray}\label{Definition of h}
h(\tau) &:=& \log f(e^{-\tau}) - \frac{\zeta(3)}{\zeta(2)} \frac{1}{\tau^2}\nonumber\\
&=& \int_{1-i\infty}^{1+i\infty}
\frac{1}{\tau^s} \Gamma(s) \frac{\zeta(s+1)\zeta(s-1)}{\zeta(s)}\ ds.
\end{eqnarray}
As
\[
m(t) - m_1(t) = -  \frac{d}{d\tau} h(\tau)
= \int_{1-i\infty}^{1+i\infty}
\frac{1}{\tau^{s+1}} \Gamma(s+1) \frac{\zeta(s+1)\zeta(s-1)}{\zeta(s)}\ ds
\]
with $t=e^{-\tau}$, we get
\begin{equation}\label{Integral Re = 1}
\frac{m(t) - m_1(t)}{\sigma(t)} = \frac{1}{\tau^2\sigma(t)}
\int_{1-i\infty}^{1+i\infty}
\tau^{1-s} \Gamma(s+1) \frac{\zeta(s+1)\zeta(s-1)}{\zeta(s)}\ ds
\end{equation}
Note that $\tau^2\sigma(t)$ converges to a non-zero constant as $t\uparrow 1$
by \eqref{Sigma}.

Set
\begin{equation}\label{DefinitionPsi}
\psi(T) := (\log T)^{-2/3}(\log\log T)^{-1/3}.
\end{equation}
Recall \cite{T}, 6.19 that $\zeta(s)$ has no zero and moreover
\begin{equation}\label{ZetaInverseEstimation}
\frac{1}{\zeta(s)}= O\left(\psi(\Im(s))^{-1}\right)
\end{equation}
in the region
\begin{equation*}
\Re(s) > 1 - A \cdot \psi(\Im(s))
\quad\text{and}\quad \Im(s) \ge T_0
\end{equation*}
for some positive constants $A$ and $T_0$,
which is known as the Vinogradov-Korobov zero-free region.
Hence, the integral of \eqref{Integral Re = 1} is equal to
\begin{equation}\label{Integration over cC}
\int_{\cC}
\tau^{1-s} \Gamma(s+1) \frac{\zeta(s+1)\zeta(s-1)}{\zeta(s)}\ ds
\end{equation}
with
\begin{equation}\label{cC eq}
\cC:\quad 
\begin{cases}
\Re(s) = 
\displaystyle 1 - A \cdot \psi(\Im(s)) & \text{if } |\Im(s)| \ge T_0, \\
\cC_0 & \text{otherwise},
\end{cases}
\end{equation}
where
$\cC_0$ is a path from $1 - A \psi(T_0) - i T_0$ to
$1 - A \psi(T_0) + i T_0$
so that every zero of $\zeta(s)$ belongs to
the left side from $\cC$ and
the real part of every point on $\cC_0$ is less than one.
It is obvious that the integral of \eqref{Integration over cC} over $\cC_0$
is $O(\tau^{\delta_0})$ for some constant $\delta_0 > 0$.
By \eqref{ZetaInverseEstimation}
there is a constant $B>0$ such that
\[
\left|\Gamma(s+1) \frac{\zeta(s+1)\zeta(s-1)}{\zeta(s)}\right|\le e^{-B\cdot \Im(s)}
\]
over $\cC$ with $|\Im(s)|\ge T_0$.
Hence, the absolute value of \eqref{Integration over cC} over $|\Im(s)| \ge T_0$ is less than
twice of
\[
\int_{T_0}^\infty \tau^{A \psi(T)} e^{-B T} \ dT.
\]
Put $\tau=e^{-u}$.
Let $T_1$ and $T_2$ satisfy $A\psi'(T_1)u+B = 0$ and $2A\psi'(T_2)u+B = 0$ respectively.
We see that $T_j \to \infty$ if $u\to\infty$ for $j=1,2$. Moreover,
by
\[
\psi'(T)=-\frac{2\log\log(T)+1}{3T(\log T)^{5/3}(\log\log T)^{4/3}}
\sim - \frac{2}{3T(\log T)^{5/3}(\log\log T)^{1/3}},
\]
we get
\[
u = -\frac{B}{j A \psi'(T_j)} \sim \frac{3B}{2jA}T_j(\log T_j)^{5/3}(\log\log T_j)^{1/3}
\]
for $j=1, 2$.
Hence, for sufficient large $u$ we have
\[
T_j < u <  T_j^{1+\delta}
\]
for an arbitrary constant $\delta>0$, which implies
$ \psi(u) \sim \psi(T_j)$ for $j=1, 2$.

Note that $A \psi(T) u + BT$ is minimal at $T_1$. 
For sufficient large $u$ (so that $T_0<T_1$),
we have
\begin{eqnarray*}
&&\int_{T_0}^\infty e^{-(A \psi(T) u + BT)}\ dT =
\int_{T_0}^{T_2} e^{-(A \psi(T) u + BT)}\ dT + \int_{T_2}^\infty e^{-(A \psi(T) u + BT)}\ dT \\
&&\le  (T_2 - T_0)e^{-(A \psi(T_1) u+ BT_1)} 
+ \frac{2}{B}\int_{T_2}^\infty \left(A \psi'(T) u + B\right) e^{-(A \psi(T) u+ BT)}\ dT\\
&&\le  (T_2 - T_0)e^{-(A \psi(T_1) u+ BT_1)}  +
\frac{2}{B}e^{-(A \psi(T_2) u + BT_2)}
=O(e^{- c \psi(u)u})
\end{eqnarray*}
for some constant $c>0$. 
\end{proof}

As the referee suggested,
the estimation of the integral of \eqref{Integral Re = 1} can be
applied to that of the sum $\sum_\gamma g_\gamma(\tau)$
over non-trivial zeros $\gamma$ of $\zeta(s)$.

\begin{prop}\label{FinerEstimationOfOcillation}
$\sum_\gamma g_\gamma(\tau) = O(\tau^{-1+c \psi(-\log \tau)})$
as $\tau\to 0$ for some constant $c>0$, where $\psi$ is as in \eqref{DefinitionPsi}.
\end{prop}
\begin{proof}
It follows from \eqref{sum_and_integral} for $c_0=1$ and Proposition \ref{Prop log of f} that
\[
\sum_\gamma g_\gamma(\tau) = \int_{1-i\infty}^{1+i\infty}
\tau^{-s} \Gamma(s+1) \frac{\zeta(s+1)\zeta(s-1)}{\zeta(s)}\ ds + O(\log \tau).
\]
Comparing with the integral of \eqref{Integral Re = 1}, we get
$\sum_\gamma g_\gamma(\tau) = O(\tau^{-1} e^{-c \psi(u) u})$
with $\tau = e^{-u}$.
\end{proof}

\subsection{Proof of Theorem \ref{MainTheorem2}}\label{ProofMainTheorem2}

Let us show Theorem \ref{MainTheorem2}.

\begin{proof}[Proof of Theorem \ref{MainTheorem2}.]
(1)
First we prove the ``only if"-part. 
Assume that Riemann hypothesis is true.
Let $h(x)$ be as in \eqref{Definition of h} and set $H(x)=h(1/x)$.
For any $\delta > 0$, there exist $\varepsilon>0$ and $t_0>0$ such that 
for any $|t| > t_0$, we have $\zeta(1/2+\delta+it)^{-1} = O\left(|t|^\varepsilon\right)$,
see \cite[(14.2.6)]{T}. 
From this, for $s=1/2+\delta+it$ we have
$\Gamma(s)\zeta(s+1)\zeta(s-1)\zeta(s)^{-1} = O\left(e^{-(\pi/2-\varepsilon')|t|}\right)$ for some $\varepsilon'>0$ as $|t|\to\infty$.
Hence
\begin{eqnarray}
H(x) &=& \lim_{T\to\infty}\frac{1}{2\pi i} \int_{1-iT}^{1+iT} x^s \Gamma(s) \frac{\zeta(s+1)\zeta(s-1)}{\zeta(s)}\ ds \label{FirstExpressionOfH}\nonumber\\
&=& \lim_{T\to\infty}\frac{1}{2\pi i}
\left(\int_{1-iT}^{1/2+\delta-iT} + \int_{1/2+\delta -iT}^{1/2+\delta+iT} 
+ \int_{1/2+\delta + iT}^{1 +iT}  \right)\nonumber\\
&&\phantom{aaaaaaaaaaaaaaaaaaa}
x^s \Gamma(s) \frac{\zeta(s+1)\zeta(s-1)}{\zeta(s)}\ ds \nonumber\\
&=& O\left(\int_{-\infty}^\infty x^{1/2+\delta}e^{-(\pi/2-\varepsilon')|v|}\ dv\right) = O\left(x^{1/2+\delta}\right)
\end{eqnarray}
as $x\to \infty$.
As
\begin{equation}\label{H and N and P}
H(\sqrt[3]{n/C})\sim \log \cN(n) - \log \cP(n),
\end{equation}
we have
$|\log \cN(n) - \log \cP(n)| = O\left(n^{1/6+\epsilon}\right)$ for any $\epsilon > 0$.

We prove the ``if"-part.
Assume $|\log \cN(n) - \log \cP(n)| = O\left(n^{1/6+\epsilon}\right)$
for any $\epsilon > 0$. 
By \eqref{H and N and P}
we have
$H(x) = O\left(x^{1/2+\epsilon}\right)$ as $x\to\infty$.
We use
\begin{equation}
\tilde H(x) = \log f(e^{-1/x}) - 
\begin{cases}
\displaystyle \frac{\zeta(3)}{\zeta(2)} x^2 & \text{if } x\ge 1, \\
0 & \text{otherwise}.
\end{cases}
\end{equation}
Note $\tilde H(x)=H(x)$ if $x\ge 1$.
By the Mellin transformation, we get
\begin{equation}\label{MellinTransformTildeH}
\Gamma(s)\frac{\zeta(s+1)\zeta(s-1)}{\zeta(s)} - \frac{\zeta(3)}{\zeta(2)}\frac{1}{(s-2)}
=\lim_{X\to\infty}\int_0^X \tilde H(x) x^{-s-1} dx.
\end{equation}
Since $\tilde H(x) = O\left(x^{1/2+\epsilon}\right)$ as $x\to\infty$ for any $\epsilon>0$
and $\tilde H(x)=O(x^{\sigma_0})$ as $x\to 0$ for any $\sigma_0>2$
by \eqref{BasicExpressionFromCahenMellin},
the right-hand side of \eqref{MellinTransformTildeH} converges over $\Re(s) > 1/2$. This implies that $\zeta(s) \ne 0$ for $\Re(s) > 1/2$.

(2)
%
Assume $|\log \cN(n) - \log \cP(n)|=O(n^{1/6})$.
Then $\tilde H(x)=O(x^{1/2})$ as $x\to \infty$.
In the same way as in the proof of the ``if"-part of (1),
\begin{eqnarray}
\Gamma(s)\frac{\zeta(s+1)\zeta(s-1)}{\zeta(s)} - \frac{\zeta(3)}{\zeta(2)}\frac{1}{(s-2)} &=&
\int_0^\infty \tilde H(x) x^{-s-1} dx \nonumber\\
&=& O\left(\frac{1}{\Re(s)-1/2}\right)
\end{eqnarray}
for $\Re(s) > 1/2$.
Let $\gamma$ be a non-trivial zero
and write $s=\gamma + h$. When $h\downarrow 0$, this is 
$O(1/h)$.
This would be false, if $\gamma$ were not simple.
\end{proof}

\subsection{Some variants}\label{Section_variants}
In the previous subsections,
we treated the case that all of the slopes of Newton polygons belong to $[0,1)$.
In this section, we study when they have other slope conditions.

In general, for $I \subset \R$, 
a Newton polygon of height $n$ and depth $d$
with slope-range $I$
is a lower-convex line graph in $\R^2$
starting at $(0,0)$ and ending at $(n,d)$
whose breaking points belong to $\Z^2$
and every slope is in $I$.
We denote by $\rho_I(n,d)$ the number of Newton polygons of height $n$ and depth $d$ with slope-range $I$.
Then
\begin{equation}
\prod_{\begin{matrix} \ell/k \in I, \\ \gcd(k,\ell)=1\end{matrix}} \frac{1}{1-x^{k}y^{\ell}}= \sum_{n=0}^\infty\sum_d \rho_I(n,d)x^ny^d.
\end{equation}
We are concerned with an asymptotic formula of
\[
\cN_I(n):=\sum_d \rho_I(n,d).
\]
Substituting one for $y$, we have a generating function of $\cN_I(n)$:
\begin{equation}\label{Definition of f_I and N_I}
f_I(x):=
\prod_{\begin{matrix} \ell/k \in I, \\ \gcd(k,\ell)=1\end{matrix}} \frac{1}{1-x^{k}}= \sum_{n=0}^\infty \cN_I(n)x^n.
\end{equation}

\subsubsection{The case of $I=[0,1]$}
As the unique segment with slope $1$ is $(k,\ell)=(1,1)$, we have
\begin{equation}
f_{[0,1]}(x) = f(x)\cdot(1-x)^{-1}.
\end{equation}
Hence
\[
\log f_{[0,1]}(e^{-\tau}) = \log f(e^{-\tau}) -\log(1-e^{-\tau})
\]
with
\[
-\log(1-e^{-\tau}) = -\log \tau + O(|\tau|).
\]
In the same way we get $\cN_{[0,1]} \sim {\tau'_n}^{-1} \cN(n)$
where $\tau_n' = C^{1/3}n^{-1/3}$. Thus
\begin{equation*}
\cN_{[0,1]}(n)
\sim
\frac{K}{\sqrt{6\pi}C^{2/9}}\frac{1}{n^{5/18}}
\exp\left(\frac{3}{2}C^{1/3}n^{2/3}+
\sum_\gamma g_\gamma\left(C^{1/3}n^{-1/3}\right)
\right)
\end{equation*}
with the same notation as in Theorem \ref{MainTheorem1}.

\subsubsection{Symmetric Newton polygons}
Let $p$ be a prime number, and we fix it throughout this section.
The Dieudonn\'e-Manin classification says that
the isogeny classes of $p$-divisible groups of
abelian varieties of dimension $g$
over an algebraically closed field in characteristic $p$
are classified by symmetric Newton polygons of height $2g$ and depth $g$.
It would be meaningful to give
an asymptotic formula of the number of symmetric Newton polygons.

A Newton polygon is said to be {\it symmetric} if 
the sum of its slope at $x$ and that at $2g-x$ is one
for every $x$ where the slope at $x$ is defined.
Symmetric Newton polygons are divided into the following two types.
Those of the first type are  of the form (as multiple sets of segments)
\[
\{(k_1,\ell_1), \ldots, (k_a,\ell_a), (k_a,k_a-\ell_a),\ldots,(k_1,k_1-\ell_1)\}
\]
with
\[
\ell_1/k_1 \le \cdots \le \ell_a/k_a \le 1/2
\]
and those of the second type are of the form
\[
\{(k_1,\ell_1), \ldots, (k_a,\ell_a), (2,1), (k_a,k_a-\ell_a),\ldots,(k_1,k_1-\ell_1)\}
\]
with
\[
\ell_1/k_1 \le \cdots \le \ell_a/k_a \le 1/2.
\]
The number of those with height $2g$ of the first type
is equal to $\cN_{[0,1/2]}(g)$
and that of the second type is $\cN_{[0,1/2]}(g-1)$.
Thus the number $\cN_\text{sym}(g)$ of symmetric Newton polygons of height $2g$ is
\[
\cN_\text{sym}(g) = \cN_{[0,1/2]}(g) + \cN_{[0,1/2]}(g-1) \sim 2\cdot\cN_{[0,1/2]}(g).
\]

Set $J=[0,1/2]$.
It suffices to give an asymptotic formula of $\cN_J(g)$.
The generating function for $J$ is
\begin{equation}\label{f_J}
f_J(x) = (1-x)^{-1/2}(1-x^2)^{-1/2}f(x)^{1/2}.
\end{equation}
This follows from the next two facts.
Firstly, putting $J'=[1/2,1]$, we have $f_J(x) = f_{J'}(x)$ by \eqref{Definition of f_I and N_I} the definition of $f_I$.
Secondly $f_J(x)f_{J'}(x) = f_{[0,1]}(x)(1-x^2)^{-1}$ holds, since 
the factor $(1-x^2)^{-1}$ of slope $1/2$ (resp. that of slope $\in [0,1]\setminus\{1/2\}$ )  appears twice (resp. once) in $f_J(x)f_{J'}(x)$.

By \eqref{f_J} we get
\begin{eqnarray*}
&&\log f_J(e^{-\tau}) =
\frac{1}{2}\log f(e^{-\tau}) - \log \tau - \frac{1}{2}\log 2 + O(|\tau|)\\
&&=
\frac{\zeta(3)}{2\zeta(2)}\tau^{-2} +
\frac{1}{2}\sum_\gamma g_\gamma(\tau) -\frac{13}{12}\log \tau + \frac{1}{2}\log K - \frac{1}{2} \log 2
+ O(|\tau|).
\end{eqnarray*}
Put
\[
m_J(t) = \frac{C}{2}\tau^{-3},\quad \sigma_J(t)= \sqrt{\frac{3C}{2}}\tau^{-2}.
\]
Let $\ft_n = e^{-\tau_n''}$ be a solution of $m_J(\ft) = n$,
namely $\tau_n'' = C^{1/3}(2n)^{-1/3}$.
Likewise, $\cN_J(n)$ is asymptotically
\[
\frac{f_J(\ft_n)}{\sqrt{2\pi}\sigma_J(\ft_n)(\ft_n)^n}.
\]
By a tedious calculation, this is equal to
\begin{equation*}
\frac{K^{1/2}}{\sqrt{6\pi}C^{7/36}}\frac{1}{(2n)^{11/36}}
\exp\left(\frac{3}{4}C^{1/3}(2n)^{2/3}+\frac{1}{2}\sum_\gamma
g_\gamma\left(C^{1/3}(2n)^{-1/3}\right)
\right)
\end{equation*}
with the same notation as in Theorem \ref{MainTheorem1}.

\section{A recurrence equation and numerical observation}\label{Section_RecurrenceEquation}
In this section, we give a recurrence equation of $\rho(n,d)$'s
and tables of $\rho(n,d)$ and $\cN(n)$,
and with these data we observe the asymptotic formula.

\subsection{A recurrence equation of $\rho(n,d)$'s}
Put
\[
\phi(x,y):= \prod_{\begin{matrix}0\le \ell/k < 1,\\ \gcd(k,\ell)=1\end{matrix}} \frac{1}{1-x^ky^\ell}=\sum_{n=0}^\infty\sum_{d=0}^n \rho(n,d)x^ny^d
\]
and consider the function with a different slope-condition:
\begin{equation}
\phi^\vee(x,y):= \prod_{\begin{matrix}0< \ell/k \le 1,\\ \gcd(k,\ell)=1\end{matrix}} \frac{1}{1-x^ky^\ell}. 
\end{equation}
The following two lemmas produce a recurrence equation of $\rho(n,d)$'s.
\begin{lem}\label{LemmaDualPhiFunction}
We have
\[
\phi^\vee(x,y)=\sum_{n=0}^\infty\sum_{d=0}^n \rho(n,n-d)x^ny^d.
\]
\end{lem}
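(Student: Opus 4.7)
The plan is to establish the algebraic identity $\phi^\vee(x,y) = \phi(xy, y^{-1})$ and then just expand the right-hand side.

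First I would observe the key bijection on the indexing sets of the two infinite products: the map $(r,s) \mapsto (r, r-s)$ sends $\{(r,s) : 0 \le s/r < 1,\ \gcd(r,s) = 1\}$ bijectively onto $\{(r,s') : 0 < s'/r \le 1,\ \gcd(r,s') = 1\}$, since $\gcd(r, r-s) = \gcd(r, s)$ and since $0 \le s < r$ if and only if $0 < r-s \le r$. This bijection is the combinatorial mirror of reflecting a Newton polygon of slope-range $[0,1)$ across the line $y = x/2$, which is precisely what will match $\rho(h, d)$ with $\rho(h, h-d)$.

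Next I would apply the formal substitution $(x,y) \mapsto (xy, y^{-1})$ to $\phi$. Factor by factor, $\frac{1}{1 - x^r y^s}$ becomes $\frac{1}{1 - (xy)^r y^{-s}} = \frac{1}{1 - x^r y^{r-s}}$. Under the bijection above, the collection of factors $\{\frac{1}{1 - x^r y^{r-s}}\}$ (as $(r,s)$ ranges over the index set of $\phi$) coincides exactly with the collection of factors defining $\phi^\vee(x,y)$. Hence
\[
\phi^\vee(x,y) \;=\; \phi(xy,\, y^{-1}).
\]

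Finally I would expand using the known series for $\phi$:
\[
\phi(xy,\, y^{-1}) \;=\; \sum_{h=0}^{\infty} \sum_{d=0}^{h} \rho(h,d)\, (xy)^h y^{-d} \;=\; \sum_{h=0}^{\infty} \sum_{d=0}^{h} \rho(h,d)\, x^h y^{h-d},
\]
and reindex with $e = h-d$ (so $0 \le e \le h$) to obtain the claimed formula.

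The only mildly delicate point is that the intermediate expression $\phi(xy, y^{-1})$ contains negative powers of $y$ in each individual factor; I would note that after multiplying the factor $(xy)^h y^{-d}$ together we have $h \ge d$ (since $\rho(h,d) = 0$ for $d > h$), so the total exponent of $y$ is non-negative and the substitution yields a bona fide formal power series in $x$ and $y$, making the reindexing legitimate.
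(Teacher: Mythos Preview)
Your proof is correct and follows essentially the same route as the paper: both arguments rest on the identity $\phi^\vee(x,y)=\phi(xy,y^{-1})$, obtained via the bijection $(r,s)\mapsto(r,r-s)$ on the index set, followed by the series expansion and reindexing $d\leftrightarrow h-d$. The paper runs the argument in the reverse direction (starting from the sum and arriving at the product), but the content is identical; your added remark that the intermediate negative powers of $y$ cancel to give a genuine power series is a point the paper leaves implicit.
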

\begin{proof}
Indeed
\[
\sum_{n=0}^\infty\sum_{d=0}^n \rho(n,n-d)x^ny^d = \sum_{n=0}^\infty\sum_{d=0}^n \rho(n,n-d)(xy)^n(y^{-1})^{n-d}
\]
is equal to
\[
\phi(xy,y^{-1}) = \prod_{\begin{matrix}0\le \ell/k < 1,\\ \gcd(k,\ell)=1\end{matrix}} \frac{1}{1-(xy)^k(y^{-1})^\ell}.
\]
This is equal to
\[
\prod_{\begin{matrix}0 < \ell'/k \le 1,\\ \gcd(k,\ell')=1\end{matrix}} \frac{1}{1-x^ky^{\ell'}}=\phi^\vee(x,y),
\]
where we put $\ell'=k-\ell$.
\end{proof}
\begin{lem}\label{LemmaForRecurrence}
We have
\[
\phi(x,y) = \phi(x,xy)\phi^\vee(xy,x).
\]
\end{lem}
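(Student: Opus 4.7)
The plan is to establish the identity by direct manipulation of the infinite products, using changes of variable that encode the decomposition of the slope range $[0,1)$ into $[0,1/2)$ and $[1/2,1)$.

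First I would substitute into the definitions. Setting $y \mapsto xy$ in $\phi$ gives
\[
\phi(x,xy) = \prod_{\substack{0 \le s/r < 1 \\ \gcd(r,s)=1}} \frac{1}{1 - x^{r+s} y^{s}},
\]
and setting $x \mapsto xy,\ y \mapsto x$ in $\phi^\vee$ gives
\[
\phi^\vee(xy,x) = \prod_{\substack{0 < s/r \le 1 \\ \gcd(r,s)=1}} \frac{1}{1 - x^{r+s} y^{r}}.
\]

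Next I would reindex each product. In the first, put $R = r+s$ and $S = s$; then $\gcd(R,S) = \gcd(r+s,s) = \gcd(r,s) = 1$, and the condition $0 \le s/r < 1$ translates to $0 \le S < R-S$, i.e.\ $0 \le S/R < 1/2$. Thus
\[
\phi(x,xy) = \prod_{\substack{\gcd(R,S)=1 \\ 0 \le S/R < 1/2}} \frac{1}{1 - x^{R} y^{S}}.
\]
In the second, put $R = r+s$ and $S = r$; then $\gcd(R,S) = \gcd(r+s,r) = 1$, and the condition $0 < s/r \le 1$ translates to $0 < R-S \le S$, i.e.\ $1/2 \le S/R < 1$. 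Thus
\[
\phi^\vee(xy,x) = \prod_{\substack{\gcd(R,S)=1 \\ 1/2 \le S/R < 1}} \frac{1}{1 - x^{R} y^{S}}.
\]

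Finally I would observe that the two index sets, $\{0 \le S/R < 1/2,\ \gcd(R,S)=1\}$ and $\{1/2 \le S/R < 1,\ \gcd(R,S)=1\}$, are disjoint and together exhaust $\{0 \le S/R < 1,\ \gcd(R,S)=1\}$, which is exactly the index set of $\phi(x,y)$. Multiplying the two reindexed products therefore gives $\phi(x,y)$, proving the lemma.

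There is no real obstacle here: the argument is a purely combinatorial bijection on index sets. The only care needed is at the boundary, namely to check that the degenerate pair $(R,S) = (1,0)$ is captured exactly once by the first factor (via $(r,s) = (1,0)$) and that the pair $(R,S) = (2,1)$ of slope $1/2$ is captured exactly once by the second factor (via $(r,s) = (1,1)$), so that the strict vs.\ non-strict inequalities in the original product conditions line up correctly after the change of variables.
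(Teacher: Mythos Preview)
Your proof is correct and essentially the same as the paper's: both rest on splitting the slope range $[0,1)$ into $[0,1/2)$ and $[1/2,1)$ and using the change of variables $(R,S)=(r+s,s)$ and $(R,S)=(r+s,r)$ respectively. The only cosmetic difference is direction---the paper starts from $\phi(x,y)$, splits it into $\phi_{<1/2}\cdot\phi_{\ge 1/2}$, and reindexes each factor to identify it with $\phi(x,xy)$ or $\phi^\vee(xy,x)$, whereas you start from the substituted expressions and reindex back.
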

\begin{proof}
Set
\[
\phi_{<\frac{1}{2}}(x,y):= \prod_{\begin{matrix}0\le \ell/k < 1/2,\\ \gcd(k,\ell)=1\end{matrix}} \frac{1}{1-x^ky^\ell}
\]
and
\[
\phi_{\ge\frac{1}{2}}(x,y):= \prod_{\begin{matrix}1/2\le \ell/k < 1,\\ \gcd(k,\ell)=1\end{matrix}} \frac{1}{1-x^ky^\ell}.
\]
Clearly 
\[
\phi(x,y) = \phi_{<\frac{1}{2}}(x,y)\phi_{\ge\frac{1}{2}}(x,y).
\]
Using $k':=k-\ell$ instead of $k$ in $\phi_{<\frac{1}{2}}(x,y)$, we have
\[
\phi_{<\frac{1}{2}}(x,y)= \prod_{\begin{matrix}0\le \ell/k' < 1,\\ \gcd(k',\ell)=1\end{matrix}} \frac{1}{1-x^{k'}(xy)^\ell} = \phi(x,xy).
\]
Similarly putting $k'=\ell$ and $\ell'=k-\ell$,
\[
\phi_{\ge\frac{1}{2}}(x,y) = \prod_{\begin{matrix}0< \ell'/k' \le 1,\\ \gcd(k',\ell')=1\end{matrix}} \frac{1}{1-(xy)^{k'}x^{\ell'}} = \phi^\vee(xy,x).
\]
These show the lemma.
\end{proof}

Now we get a recurrence equation of $\rho(n,d)$'s.

\begin{prop}\label{PropForRecurrence}
We have
\[
\rho(n,d) = \sum_{\begin{matrix}n-d=\alpha+\delta,\\ d=\beta+\gamma\end{matrix}} \rho(\alpha,\beta)\rho(\gamma,\gamma-\delta),\]
where $\alpha,\beta,\gamma$ and $\delta$ run through non-negative integers.
\end{prop}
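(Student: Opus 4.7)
The plan is to prove Proposition \ref{PropForRecurrence} by straightforward coefficient comparison, using the factorization identity of Lemma \ref{LemmaForRecurrence} and the explicit power series expansion of $\phi^\vee$ from Lemma \ref{LemmaDualPhiFunction}. No analysis is needed; everything reduces to substitution and matching exponents.

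First I would expand the two factors on the right-hand side of $\phi(x,y) = \phi(x,xy)\phi^\vee(xy,x)$ as formal power series in $x,y$. Using the definition of $\phi$, we have
\[
\phi(x,xy) = \sum_{\alpha,\beta} \rho(\alpha,\beta)\, x^\alpha (xy)^\beta = \sum_{\alpha,\beta} \rho(\alpha,\beta)\, x^{\alpha+\beta} y^\beta,
\]
and using Lemma \ref{LemmaDualPhiFunction},
\[
\phi^\vee(xy,x) = \sum_{\gamma,\delta} \rho(\gamma,\gamma-\delta)\, (xy)^\gamma x^\delta = \sum_{\gamma,\delta} \rho(\gamma,\gamma-\delta)\, x^{\gamma+\delta} y^\gamma,
\]
where $\alpha,\beta,\gamma,\delta$ range over non-negative integers (and the coefficients vanish unless $\beta\le \alpha$ and $\delta\le \gamma$, so all sums are well-defined termwise).

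Next I would multiply these two series and collect the coefficient of $x^h y^d$. A monomial $x^{\alpha+\beta+\gamma+\delta} y^{\beta+\gamma}$ contributes to $x^h y^d$ exactly when $h = \alpha+\beta+\gamma+\delta$ and $d = \beta+\gamma$, equivalently $h-d = \alpha+\delta$ and $d = \beta+\gamma$. Comparing with $\phi(x,y) = \sum_{h,d} \rho(h,d) x^h y^d$ then yields
\[
\rho(h,d) = \sum_{\substack{h-d=\alpha+\delta\\ d=\beta+\gamma}} \rho(\alpha,\beta)\,\rho(\gamma,\gamma-\delta),
\]
as claimed.

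There is no genuine obstacle; the only point requiring a little care is checking that the rearrangement of the formal products is legitimate. This is immediate because in each fixed total degree only finitely many terms contribute, so the product of the two series is well-defined coefficient by coefficient and the identity of Lemma \ref{LemmaForRecurrence} may be read off termwise.
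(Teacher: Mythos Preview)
Your proof is correct and follows essentially the same approach as the paper: both use Lemma \ref{LemmaForRecurrence} together with the expansion of $\phi^\vee$ from Lemma \ref{LemmaDualPhiFunction}, substitute to obtain the two series $\sum \rho(\alpha,\beta)x^{\alpha}(xy)^{\beta}$ and $\sum \rho(\gamma,\gamma-\delta)(xy)^{\gamma}x^{\delta}$, and compare the coefficient of $x^h y^d$. You have simply written out the exponent-matching step a bit more explicitly than the paper does.
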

\begin{proof}
It follows from
Lemmas \ref{LemmaDualPhiFunction} and \ref{LemmaForRecurrence} that
\[
\sum_{n=0}^\infty\sum_{d=0}^n \rho(n,d)x^ny^d
\]
is equal to
\[
\left(\sum_{\alpha=0}^\infty\sum_{\beta=0}^\alpha \rho(\alpha,\beta)x^\alpha (xy)^\beta\right)\left(\sum_{\gamma=0}^\infty\sum_{\delta=0}^\gamma \rho(\gamma,\gamma-\delta)(xy)^\gamma x^\delta\right).
\]
Compare the coefficients of $x^ny^d$ of the both sides.
\end{proof}

\begin{rem}
This recurrence equation does not contain any number-theoretic operation.
But $\rho(n,d)$ should be also ``oscillating", because the maximum of $\rho(n,d)$ ($d = 0,1,\ldots, n-1$) has the same order as $\cN(n)$
in the exponential part.
We shall discuss the asymptotic formula of $\rho(n,d)$
in a separated paper.
\end{rem}

See the web page of the author \cite{HPharashita}
for $\rho(n,d)$ for $n \le 1000$
computed from this recurrence equation
and the code by Magma (\cite{Magma} and \cite{MagmaHP}).
Here is the table of $\rho(n,d)$ for small $n$.
\bigskip

\setlength\arraycolsep{1.4pt}
\setlength\tabcolsep{1.4pt}
\begin{tabular}[h]
{|c||c|c|c|c|c|c|c|c|c|c|c|c|c|c|c|}
\hline
\backslashbox{~$n$}{$d$~~} 
  & 0 & 1 & 2 & 3 & 4 & 5 & 6 & 7 & 8 & 9 & 10& 11& 12& 13& 14\\ \hline\hline
 $0$ & \phantom{0}1\phantom{0} & \phantom{0}0\phantom{0} & \phantom{0}0\phantom{0} & \phantom{0}0\phantom{0} & \phantom{0}0\phantom{0} & \phantom{0}0\phantom{0} & \phantom{0}0\phantom{0} & \phantom{0}0\phantom{0} & \phantom{0}0\phantom{0} & \phantom{0}0\phantom{0} & \phantom{0}0\phantom{0} & \phantom{0}0\phantom{0} & \phantom{0}0\phantom{0} & \phantom{0}0\phantom{0} & \phantom{0}0\phantom{0} \\ \hline
 $1$ & 1 & 0 & 0 & 0 & 0 & 0 & 0 & 0 & 0 & 0 & 0 & 0 & 0 & 0 & 0 \\ \hline
 $2$ & 1 & 1 & 0 & 0 & 0 & 0 & 0 & 0 & 0 & 0 & 0 & 0 & 0 & 0 & 0 \\ \hline
 $3$ & 1 & 2 & 1 & 0 & 0 & 0 & 0 & 0 & 0 & 0 & 0 & 0 & 0 & 0 & 0 \\ \hline
 $4$ & 1 & 3 & 2 & 1 & 0 & 0 & 0 & 0 & 0 & 0 & 0 & 0 & 0 & 0 & 0 \\ \hline
 $5$ & 1 & 4 & 4 & 3 & 1 & 0 & 0 & 0 & 0 & 0 & 0 & 0 & 0 & 0 & 0 \\ \hline
 $6$ & 1 & 5 & 6 & 5 & 3 & 1 & 0 & 0 & 0 & 0 & 0 & 0 & 0 & 0 & 0 \\ \hline
 $7$ & 1 & 6 & 9 & 9 & 7 & 4 & 1 & 0 & 0 & 0 & 0 & 0 & 0 & 0 & 0 \\ \hline
 $8$ & 1 & 7 & 12& 14& 12& 9 & 4 & 1 & 0 & 0 & 0 & 0 & 0 & 0 & 0 \\ \hline
 $9$ & 1 & 8 & 16& 20& 20& 17& 10& 5 & 1 & 0 & 0 & 0 & 0 & 0 & 0 \\ \hline
$10$& 1 & 9 & 20& 28& 31& 28& 21& 13& 5 & 1 & 0 & 0 & 0 & 0 & 0 \\ \hline
$11$& 1 & 10& 25& 38& 45& 45& 38& 27& 15& 6 & 1 & 0 & 0 & 0 & 0 \\ \hline
$12$& 1 & 11& 30& 49& 63& 68& 63& 50& 33& 17& 6 & 1 & 0 & 0 & 0 \\ \hline
$13$& 1 & 12& 36& 63& 86& 99& 98& 85& 64& 40& 20& 7 & 1 & 0 & 0 \\ \hline
$14$& 1 & 13& 42& 79&114&139&147&136&113& 80& 48& 23& 7 & 1 & 0 \\ \hline
$15$& 1 & 14& 49& 97&148&189&212&209&186&145& 98& 57& 25& 8 & 1 \\ \hline
\end{tabular}

\subsection{Numerical observation}
To give a table of $\cN(n)$,
we compute $\cN(n)$ by using the generating function \eqref{the generating function of N}, since it is much faster than by using the recurrence equation
(Proposition \ref{PropForRecurrence}) and \eqref{N is sum of rho}.
See the web page of the author \cite{HPharashita}
for the code by Magma (\cite{Magma} and \cite{MagmaHP})
and its log-file for the list of $\cN(n)$
for $n\le 100000$. Here is a sample:

\begin{center}
\begin{tabular}[h]{|r|l|l|}\hline
$n$ & $\cN(n)$ & $\cP(n)$ \\ \hline\hline
$1$ & $1$ & $1.350821266\cdots$ \\ \hline
2 & 2 &     $2.403759900\cdots$ \\ \hline
3 & 4 &     $4.340223158\cdots$ \\ \hline
4 & 7 &     $7.696029030\cdots$ \\ \hline
5 & 13 &    $13.36116532\cdots$ \\ \hline
6 & 21 &    $22.74249494\cdots$ \\ \hline
7 & 37 &    $38.03034100\cdots$ \\ \hline
8 & 60 &    $62.59799195\cdots$ \\ \hline
9 & 98 &    $101.5922302\cdots$ \\ \hline
10 & 157 &  $162.7997475\cdots$ \\ \hline
100  &$124156847482548$& $1.248747592\cdots\ 10^{14}$ \\ \hline
1000 &
$3.061052712\cdots\ 10^{71}$
&$3.064377128\cdots\ 10^{71}$ \\ \hline
10000 &
$1.235480725\cdots\ 10^{340}$
& $1.235736815\cdots\ 10^{340}$ \\ \hline
100000 & $1.185775851\cdots\ 10^{1589}$
& $1.185822461\cdots\ 10^{1589}$\\ \hline
\end{tabular}
\end{center}

From this table,
$\cN(n)/\cP(n)$ looks to be approaching to $1$.
This is true for relatively small $n$,
but as Theorem \ref{MainTheorem1} says,
$\log \cN(n)$ oscillates around $\log \cP(n)$ for large $n$.

To see the oscillation, let us illustrate 
the contribution of the first zero $\gamma_1 = 1/2 + 14.13472514\cdots \sqrt{-1}$.
With the notation of Theorem \ref{MainTheorem1},
Figure \ref{fig:one} is the graph of
\begin{equation}\label{EquationWaveFirstZero}
y = \exp\left(2\Re\left(c_{\gamma_1} C^{-\gamma_1/3} x^{\gamma_1/3} \right)\right)
\end{equation}
with $c_\gamma = \Gamma(\gamma)\zeta(\gamma+1)\zeta(\gamma-1)\zeta'(\gamma)^{-1}$,
which is an output by Maple 2016 (\cite{MapleHP}), see the web page of the author \cite{HPharashita}.
\begin{figure}[h]
 \begin{center}
  \includegraphics[width=10cm,height=5cm]{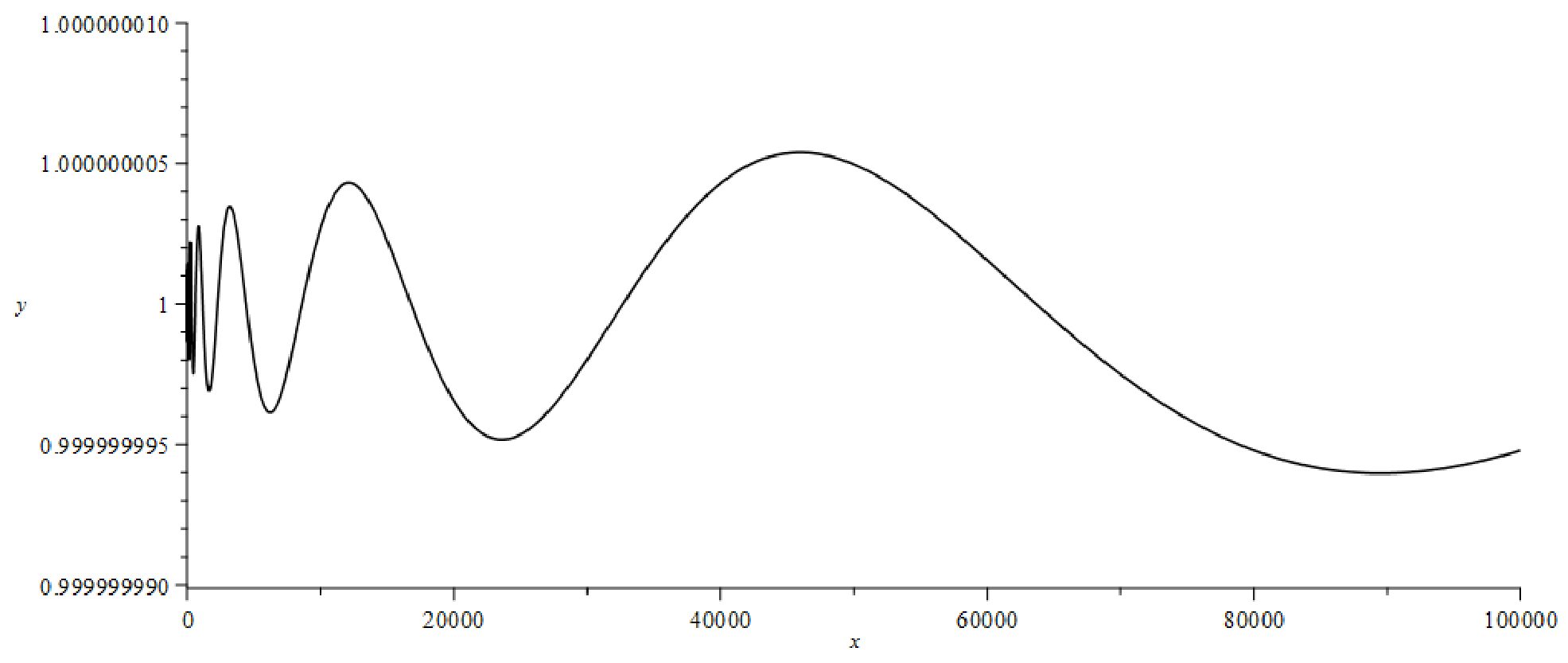}
 \end{center}
 \caption{The wave of the first zero}
 \label{fig:one}
\end{figure}
The wave of the first zero occupies most of 
the oscillatory part,
if the Riemann hypothesis is true,
every zeros are simple and
$c_\gamma$
is of rapid decay as $\Im(\gamma) \to \infty$
(very plausible since $\Gamma(\gamma)$ is rapidly decreasing).
Here is a numerical data
\begin{eqnarray*}
c_{\gamma_1}&=&3.011993987\cdots 10^{-11}+4.792386731 \cdots 10^{-10}\sqrt{-1},\\
c_{\gamma_2}&=&-5.721173997\cdots 10^{-15}+1.369306521\cdots 10^{-14}\sqrt{-1},\\
c_{\gamma_3}&=&-2.705070957\cdots 10^{-17}+2.213981113\cdots 10^{-17}\sqrt{-1}
\end{eqnarray*}
for $\Im(\gamma_2) = 21.02203963\cdots$ and
$\Im(\gamma_3)=25.01085758\cdots$.
Comparing Figure \ref{fig:one} and the table of $\cN(n)$, we see that
the amplitude of the wave
is still much smaller than the error term for $x\le 100000$.
However, it gets larger as $x$ increases.
From Figure \ref{fig:one}, the reader can guess how it grows.


\begin{thebibliography}{99}
\bibitem{BD} L. B\'aez-Duarte: {\it Hardy-Ramanujan's asymptotic formula for
partitions and the central limit theorem}, Adv. Math. {\bf 125} (1997) 114--120.
\bibitem{Magma}
W. Bosma, J. Cannon and C. Playoust: \textit{The Magma algebra system. I. The user language}, Journal of Symbolic Computation {\bf 24}, 235--265 (1997)

\bibitem{MagmaHP}
J. Cannon, et al.: \textit{Magma a Computer Algebra System},
School of Mathematics and Statistics, University of Sydney, 2017.\ \texttt{http://magma.maths.usyd.edu.au/magma/}

\bibitem{C} 
H. Cram\'er:
{\it Random variables and probability distributions.}
Third edition. Cambridge Tracts in Mathematics and Mathematical Physics, No. {\bf 36} Cambridge University Press, London-New York 1970.

\bibitem{HN}
G. Harder and M. S. Narasimhan:
{\it On the cohomology groups of moduli spaces of vector bundles on curves.}
Math. Ann. {\bf 212} (1975) Issue 3, 215--248.

\bibitem{HR}
G. H. Hardy and S. Ramanujan:
{\it Asymptotic Formula\ae\ in Combinatory Analysis.}
Proc. London Math. Soc. S2-17 (1918), no. 1, 75--115. 

\bibitem{Manin} Yu. I. Manin:
{\it The theory of commutative formal groups over fields of finite characteristic.}
Uspehi Mat. Nauk {\bf 18} (1963) no.\,6 (114), 3--90;
Russ. Math. Surveys {\bf 18} (1963), 1-80.

\bibitem{M}
G. Meinardus:
{\it Asymptotische Aussagen \"uber Partitionen}.
Math. Z. {\bf 59} (1954) 388--398.

\bibitem{R}
B. Riemann:
{\it Ueber die Anzahl der Primzahlen unter einer gegebenen Gr\"osse.}
Monatsberichte der Berliner Akademie, November 1859.

\bibitem{T}
E. C. Titchmarsh:
{\it The theory of the Riemann zeta-function}. Second edition. Edited and with a preface by D. R. Heath-Brown. The Clarendon Press, Oxford University Press, New York, 1986.

\bibitem{MapleHP} Maple User Manual: Toronto: Maplesoft, a division of Waterloo Maple Inc., 2016., available on the web page\\
\texttt{http://www.maplesoft.com/products/maple/}

\bibitem{HPharashita} Computation programs and log files for the paper ``Asymptotic formula of the number of Newton polygons'', available on the web page\\
\texttt{http://www.h-lab.ynu.ac.jp/AFofNP.html}
\end{thebibliography}
\end{document}